\documentclass[a4paper]{amsart}

\usepackage[T1]{fontenc}
\usepackage{amsmath, amssymb, amsthm, mathrsfs, mathtools, graphicx}
\usepackage{varioref}
\usepackage[toc,page]{appendix}
\usepackage{enumitem, xspace, ifthen, comment}
\usepackage[all]{xy}
\usepackage[dvipsnames]{xcolor}
\usepackage{tikz-cd}
\usepackage{stmaryrd}
\usepackage{braket}
\usepackage{tikz-cd}
\tikzset{
    labl/.style={anchor=south, rotate=90, inner sep=.5mm}
}
\usepackage{hyperref}
\usepackage[nameinlink, capitalize]{cleveref}
\usetikzlibrary{positioning}
\usepackage{cleveref}
\usepackage[numbers]{natbib}
\newtheorem{theorem}{Theorem}[section]
\newtheorem{proposition}[theorem]{Proposition}
\newtheorem{lemma}[theorem]{Lemma}

\newtheorem{claim}[theorem]{Claim}
\theoremstyle{definition}
\newtheorem{definition}[theorem]{Definition}
\newtheorem{example}[theorem]{Example}

\newtheorem{remark}[theorem]{Remark}

\newtheorem{notation}[theorem]{Notation}

\def\gl{\mathop{\mathrm{GL}}\nolimits}
\def\pr{\mathop{\mathrm{pr}}\nolimits}

\def\id{\mathop{\mathrm{id}}\nolimits}
\usepackage[scr=boondoxo]{mathalfa}

\title[A characterization of ball quotient stacks]{A characterization of ball quotient stacks}
\author{Chirantan Chowdhury}
\author{Matteo Costantini}
\author{Aryaman Patel}

\begin{document}

\begin{abstract}
We characterize smooth proper Deligne-Mumford stacks $\mathscr{X}$ that arise as compactifications of ball quotient stacks $[\mathbb{B}^d/\Gamma]$. Moreover, we show that every ball quotient admits a compactification whose boundary divisor $\mathscr{D}:=\mathscr{X}-[\mathbb{B}^d/\Gamma]$ is a disjoint union of quotient stacks $[A/G]$, where $A$ is an abelian variety and $G$ is a finite group. This generalizes a result of Deng-Cadorel. Our strategy combines Simpson's non-abelian Hodge correspondence for smooth proper DM-stacks, Mochizuki's generalization of the classical Simpson's correspondence to the log setting, and the uniformization results of Deng-Cadorel.
\end{abstract}

\maketitle

\section{Introduction}

The main aim of this article is to characterize smooth Deligne-Mumford stacks that are quotients of the unit ball $\mathbb{B}^d\subset\mathbb{C}^d$. This generalizes the results of Deng and Cadorel in \cite{dengcadorel}, where they characterized quasi-projective quotients of $\mathbb{B}^d$.

We make crucial use of the ideas developed by Simpson in the article \cite{simpsonstacks}, where he constructs a so-called smooth projective hypercovering $f:Z_\bullet\to\mathscr{X}$ of a smooth proper DM-stack $\mathscr{X}$ that is surjective where \'etale (see \cite[Section 5]{simpsonstacks}). 

Roughly speaking, the object $Z_\bullet$ can be viewed as a disjoint union of smooth projective varieties that cover $\mathscr{X}$. The surjective where \'etale property means that the map $f$ is surjective when restricted to the open subset of $Z_\bullet$ where it is \'etale. 

Given a simple normal crossing divisor $\mathscr{D}\subset\mathscr{X}$, we define a \emph{good covering} for the pair $(\mathscr{X},\mathscr{D})$ as a proper surjective hypercovering $f:Z_\bullet\to\mathscr{X}$ such that the reduced divisor underlying $f^*\mathscr{D}$ on $Z_\bullet$ also has simple normal crossings. Lemma \ref{Hypercover-lem} shows that a good covering always exists.

We define a bundle on $\mathscr{X}$ to be (semi)stable if its pullback to a good covering $Z_\bullet$ slope-(semi)stable with respect to some ample polarization on $Z_\bullet$. Although this notion of stability may not seem natural at first, it allows us to avoid imposing more conditions on the stack $\mathscr{X}$ (such as having a projective coarse moduli space). This is also very similar to Simpson's notion of potential (semi)stability.

The first main result of this article is the following uniformization statement, which generalizes \cite[Theorem A]{dengcadorel}.

\begin{theorem}\label{Uniformization-thm}
Let $\mathscr{X}$ be a smooth proper $d$-dimensional DM-stack of finite type, and let $\mathscr{D}\subset \mathscr{X}$ be an snc divisor on $\mathscr{X}$. Let $f:Z_\bullet\to\mathscr{X}$ be a good covering and let $\mathcal{L}$ be an ample polarizarion on $Z_\bullet$. If the log Higgs bundle $(\Omega^1_\mathscr{X}(\log \mathscr{D})\oplus\mathcal{O}_\mathscr{X},\theta)$ with Higgs field given by 
\begin{align*}
\theta:\Omega^1_{\mathscr{X}}(\log\mathscr{D})\oplus\mathcal{O}_{\mathscr{X}}&\to(\Omega^1_{\mathscr{X}}(\log\mathscr{D})\oplus\mathcal{O}_{\mathscr{X}})\otimes\Omega^1_{\mathscr{X}}(\log \mathscr{D})\\
(a,b)&\mapsto(0,1)\otimes a
\end{align*}
is polystable with respect to $\mathcal{L}$, then the Chern class inequality
\begin{align}\label{BMY1}
    (2(d+1)c_2(f^*\Omega^1_{\mathscr{X}}(\log \mathscr{D}))-dc_1(f^*\Omega^1_{\mathscr{X}}(\log \mathscr{D}))^2)\cdot c_1(\mathcal{L})^{d-2}\ge0
\end{align}
holds on $Z_\bullet$.

Moreover, if $\mathscr{D}$ is smooth and equality holds in \ref{BMY1}, then there is an isomorphism of stacks $\mathscr{X}-\mathscr{D}\cong[\mathbb{B}^d/\Gamma]$, where $\mathbb{B}^d\subset\mathbb{C}^d$ is the unit ball and $\Gamma\subset\mathrm{PU}(d,1)$ is a lattice acting on $\mathbb{B}^d$ via automorphisms.

In this case, $\mathscr{X}$ is birationally equivalent to a quotient stack $[X'/G]$, where $X'$ is the smooth toroidal compactification of a smooth ball quotient $\mathbb{B}^d/\Gamma'$, for $\Gamma'\subset\Gamma$ a normal torsion-free sublattice of finite index and $G:=\Gamma/\Gamma'$. 

If the moduli space $X$ of $\mathscr{X}$ is assumed to be projective, then $X\cong X'/G$ and $X$ is isomorphic to a projective compactification of $\mathbb{B}^d/\Gamma$ such that the boundary $D:=X-\mathbb{B}^d/\Gamma$ is isomorphic to a disjoint union of quotients of abelian varieties by finite groups.
\end{theorem}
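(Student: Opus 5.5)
The plan is to reduce everything to Deng--Cadorel \cite[Theorem A]{dengcadorel} on the pieces of the good covering, using Simpson's hypercovering formalism \cite{simpsonstacks} to go back and forth between $\mathscr{X}$ and $Z_\bullet$.

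\textbf{Step 1: the inequality.} Pull back the log Higgs bundle $(\Omega^1_{\mathscr{X}}(\log\mathscr{D})\oplus\mathcal{O}_{\mathscr{X}},\theta)$ along $f:Z_\bullet\to\mathscr{X}$. By the definition of good covering, each component $Z_i$ is smooth projective and $D_i:=(f^*\mathscr{D})_{\mathrm{red}}$ is snc. We get a log Higgs bundle $(E,\theta_E)=(f^*\Omega^1_{\mathscr{X}}(\log\mathscr{D})\oplus\mathcal{O},f^*\theta)$ on $(Z_i,D_i)$. By our definition of (poly)stability on $\mathscr{X}$, this bundle is $c_1(\mathcal{L})$-polystable on $Z_\bullet$. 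It is not literally the canonical uniformizing Higgs bundle of $(Z_i,D_i)$, since $f$ may ramify. Nevertheless it is a polystable logarithmic Higgs bundle with vanishing parabolic weights, because $\mathscr{D}$ is a divisor on a smooth stack and the pullback of log forms is log along the reduced divisor.

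We then apply Mochizuki's Bogomolov--Gieseker inequality for polystable (parabolic) log Higgs bundles on $Z_i$:
\[
\Big(2\,\mathrm{rk}(E)\,c_2(E)-(\mathrm{rk}(E)-1)\,c_1(E)^2\Big)\cdot c_1(\mathcal{L})^{d-2}\ge 0 .
\]
Since $\mathrm{rk}(E)=d+1$, $c_1(E)=c_1(f^*\Omega^1(\log\mathscr{D}))$ and $c_2(E)=c_2(f^*\Omega^1(\log\mathscr{D}))$, this gives \eqref{BMY1}.

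\textbf{Step 2: equality gives the ball.} In the equality case, Mochizuki's theorem says the Higgs bundle comes from a flat, tame, pluriharmonic bundle on each $Z_i-D_i$, i.e.\ a complex variation of Hodge structure of weight type $(d,1)$. Simpson's stacky correspondence, via descent along the hypercovering (the bundle being pulled back from $\mathscr{X}$), glues these into a representation $\rho:\pi_1(\mathscr{X}-\mathscr{D})\to\mathrm{PU}(d,1)$ together with an equivariant period map $\widetilde{\mathscr{X}-\mathscr{D}}\to\mathbb{B}^d$. The Higgs field is the tautological isomorphism $T(-\log)\to\mathrm{Hom}(\mathcal{O},\Omega^1(\log))^\vee$, so the differential of the period map is an isomorphism and the period map is a local biholomorphism. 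Smoothness of $\mathscr{D}$ lets us apply Deng--Cadorel's completeness argument on étale charts; this is where $\mathscr{D}$ smooth is needed, as it gives unipotent local monodromy and cusp-type neighbourhoods. With that, the period map is a covering, hence an isomorphism, since $\mathbb{B}^d$ is simply connected. Taking $\Gamma=\rho(\pi_1)$, this yields $\mathscr{X}-\mathscr{D}\cong[\mathbb{B}^d/\Gamma]$. That $\Gamma$ is a lattice follows from finiteness of the volume, which equals a positive multiple of $c_1^d$.

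\textbf{Step 3: the remaining statements.} By Selberg's lemma, choose a normal torsion-free finite-index $\Gamma'\subset\Gamma$, and choose it neat so that the toroidal (Mok / Ash--Mumford--Rapoport--Tai) compactification $X'$ of $\mathbb{B}^d/\Gamma'$ is smooth with boundary a disjoint union of abelian varieties. Put $G=\Gamma/\Gamma'$. Then $[X'/G]$ and $\mathscr{X}$ share the dense open $[\mathbb{B}^d/\Gamma]$, so they are birational. If the coarse space $X$ is projective, both $X$ and $X'/G$ are normal projective compactifications of $\mathbb{B}^d/\Gamma$ with divisorial boundary. Extending the identity across the boundary, using the Borel extension theorem and the fact that near each boundary component both spaces are quotients of the same toroidal neighbourhood, gives $X\cong X'/G$, and the boundary becomes $\bigsqcup A_j/G_j$.

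The main obstacle is Step 2: showing the period map is a covering, i.e.\ completeness near $\mathscr{D}$ in the stacky setting, and checking that descent of the harmonic metric from $Z_\bullet$ to $\mathscr{X}$ is compatible with Mochizuki's log theory. I would first attempt it by working on an étale atlas $U\to\mathscr{X}$ where $\mathscr{D}$ is smooth, applying Deng--Cadorel's local estimates there, and using that $\mathscr{X}$ is proper to get finitely many such charts.
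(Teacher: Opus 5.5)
Your Step 1 is the paper's argument. Checking completeness on \'etale charts rather than on the levels $V_k$ is a reasonable variant. There are, however, two real gaps.

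\textbf{From equality to a VHS.} You say that in the equality case Mochizuki's correspondence makes $(f_k^*\mathcal{E},f_k^*\theta)$ come from a tame flat bundle. That correspondence (\cite[Theorem 9.4]{mochizuki_asterisque}, i.e.\ the paper's Theorem \ref{correspondence}) needs $c_1\cdot c_1(\mathcal{L})^{d-1}=0$ and $ch_2\cdot c_1(\mathcal{L})^{d-2}=0$. Equality in \eqref{BMY1} only kills the discriminant. Meanwhile $c_1(f^*\mathcal{E})=f^*c_1(\Omega^1_{\mathscr{X}}(\log\mathscr{D}))$ is in general positive against $c_1(\mathcal{L})^{d-1}$ (already for compact ball quotients, whose canonical bundle is ample), so $f_k^*\mathcal{E}$ itself is not the Higgs bundle of a flat bundle.

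The paper instead works with $\mathcal{E}nd(\mathcal{E})$, where $c_1=0$ and $ch_2\cdot c_1(\mathcal{L})^{d-2}=-\Delta\cdot c_1(\mathcal{L})^{d-2}=0$. It gets a Hodge metric there via Proposition \ref{Hodge-metric}. It then needs the Tannakian argument of Proposition \ref{Principal-VHS-prop} to turn a metric on the adjoint bundle into a reduction of the $\mathrm{P}(\mathrm{GL}_d\times\mathrm{GL}_1)$-bundle to $\mathrm{P}(\mathrm{U}(d)\times\mathrm{U}(1))$. Only this produces the $\mathrm{PU}(d,1)$-VHS and period map that your $\rho$ presupposes.

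The gluing you defer as the ``main obstacle'' is also not supplied by Simpson's correspondence, which has no boundary. The missing idea is Claim \ref{claim-descent}: on each stable summand the adapted harmonic metric is unique up to scaling. Hence the two pullbacks to $V_1$ agree up to a constant, because they already do on the dense locus $W_1$ where $f$ is \'etale.

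A smaller point: $\mathscr{X}-\mathscr{D}\cong[\mathbb{B}^d/\pi_1]$ equals $[\mathbb{B}^d/\rho(\pi_1)]$ only when $\rho$ is injective. This fails if $\mathscr{X}$ has a nontrivial generic stabilizer, and the paper's statement has the same blind spot.

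\textbf{Step 3 does not prove $X\cong X'/G$.} Two normal projective compactifications of the same variety with divisorial boundary need not be isomorphic. Borel's extension theorem only extends maps into the Baily--Borel compactification, where each cusp is a point. A map from a punctured polydisc extends into the \emph{toroidal} model only if its local monodromy is central in the unipotent radical of the cusp. So ``both spaces are quotients of the same toroidal neighbourhood'' is exactly what has to be shown, not an input.

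The paper proceeds as follows:
\begin{enumerate}
\item It normalizes $X$ in the function field of $U'=\mathbb{B}^d/\Gamma'$, obtaining a finite cover of $X$ extending $U'\to U$.
\item It gets a birational morphism from this normalization to the toroidal compactification $Y'$ via \cite[Lemma A.4]{dengcadorel}, and shows it is an isomorphism via \cite[Lemma A.11]{dengcadorel}.
\item It extends the $G$-action to $Y'$, which you also use tacitly when forming $[X'/G]$.
\item It concludes $X\cong Y'/G$ via \cite[Lemmas A.13, A.14]{dengcadorel}.
\end{enumerate}
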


We note that the assumption of $\mathscr{D}$ being smooth in Theorem \ref{Uniformization-thm} is indeed necessary, as remarked by Deng and Cadorel. 

\begin{remark}
If $\mathscr{D}$ is only snc but not smooth, then the universal covering stack $\widetilde{\mathscr{X}-\mathscr{D}}$ is \emph{not} isomorphic to $\mathbb{B}^d$. This is the content of \cite[Theorem 5.7(ii)]{dengcadorel}. However, it follows from Lemma \ref{Completeness-lem} that there is still an \'etale morphism $\widetilde{\mathscr{X}-\mathscr{D}}\to\mathbb{B}^d$. In particular, $\widetilde{\mathscr{X}-\mathscr{D}}$ is a smooth analytic space.
\end{remark}

When $\mathscr{D}$ is empty and $\mathscr{X}$ has trivial generic stabilizer (i.e., it is an orbifold), Theorem \ref{Uniformization-thm} has been proved in \cite{GP24}. A characterization of projective ball quotients with klt singularities has been obtained in \cite{GKPT} as a consequence of the non-abelian Hodge correspondence for klt spaces. The results in \cite{GKPT} were generalized in \cite{P23} to obtain a characterization of projective quotients of bounded symmetric domains with klt singularities. 

In order to prove Theorem \ref{Uniformization-thm}, we establish a Simpson-Mochizuki correspondence for log Higgs bundles on smooth proper DM-stacks that may be of independent interest (Theorem \ref{correspondence}). 

We also establish the following converse to Theorem \ref{Uniformization-thm}, which generalizes \cite[Theorem B]{dengcadorel}.

\begin{theorem}\label{Converse-thm}
Let $\Gamma\in\mathrm{PU}(d,1)$ be a lattice which acts on $\mathbb{B}^d$ by automorphisms. Then the quotient stack $[\mathbb{B}^d/\Gamma]$ admits a compactification $\mathscr{X}$ such that $\mathscr{X}$ is a smooth proper DM-stack and the boundary divisor $\mathscr{D}\subset\mathscr{X}$ is a disjoint union of quotients of abelian varieties by finite groups.  

Moreover, the Chern class equality
\begin{align}\label{CCeq}
2(d+1)c_2(\Omega^1_{\mathscr{X}}(\log \mathscr{D}))-dc_1(\Omega^1_{\mathscr{X}}(\log \mathscr{D}))^2=0
\end{align}
holds on $\mathscr{X}$ and the log Higgs bundle $(\Omega^1_{\mathscr{X}}(\log \mathscr{D})\oplus\mathcal{O}_{\mathscr{X}},\theta)$ defined in Theorem \ref{Uniformization-thm} is polystable with respect to any ample polarization on a good covering $Z_\bullet\to\mathscr{X}$ for the pair $(\mathscr{X},\mathscr{D})$.
\end{theorem}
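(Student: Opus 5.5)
Since $\Gamma$ is a lattice in $\mathrm{PU}(d,1)$, it is finitely generated, so Selberg's lemma gives a torsion-free normal subgroup $\Gamma'\subset\Gamma$ of finite index; by passing to a further finite-index normal subgroup we may assume all parabolic elements of $\Gamma'$ are unipotent, i.e.\ $\Gamma'$ is neat. Set $G:=\Gamma/\Gamma'$. By the Baily--Borel and Ash--Mumford--Rapoport--Tai theory, in the form of Mok's toroidal compactification for rank-one quotients (as used in \cite[Theorem B]{dengcadorel}), the smooth ball quotient $U:=\mathbb{B}^d/\Gamma'$ has a smooth projective toroidal compactification $X'$. Its boundary $D'=X'-U$ is a disjoint union of abelian varieties, one for each $\Gamma'$-conjugacy class of cusps. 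Because $\Gamma'$ is neat, this compactification is canonical: it is determined by the cusp data with no choice of fan in rank one. The $G$-action on $U$ therefore extends to an action on $X'$ preserving $D'$, and $G$ permutes the boundary components. We then define $\mathscr{X}:=[X'/G]$ and $\mathscr{D}:=[D'/G]$. This $\mathscr{X}$ is a smooth proper DM-stack containing $[U/G]\cong[\mathbb{B}^d/\Gamma]$ as the open substack, since $\Gamma'$ acts freely. Grouping the components of $D'$ into $G$-orbits, and writing $A$ for a component and $G_A$ for its stabilizer, we get $\mathscr{D}\cong\bigsqcup [A/G_A]$. The components of $D'$ are disjoint, so $\mathscr{D}$ is smooth.

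For the Chern class equality, note that $X'\to\mathscr{X}$ is a finite étale cover and $\Omega^1_{\mathscr{X}}(\log\mathscr{D})$ pulls back to $\Omega^1_{X'}(\log D')$. Rational Chern classes of the stack can be computed after this pullback, since pullback along a finite étale surjection is injective on rational Chow/cohomology. It therefore suffices to prove $2(d+1)c_2-dc_1^2=0$ for $\Omega^1_{X'}(\log D')$, which is exactly \cite[Theorem B]{dengcadorel} (Mumford's proportionality in the log setting). To be safe, I would re-derive it from the fact that the Bergman metric induces a good singular metric on $T_{X'}(-\log D')$ whose curvature is that of the ball, so that the Chern forms satisfy the proportionality pointwise.

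For polystability, fix a good covering $f:Z_\bullet\to\mathscr{X}$ and an ample $\mathcal{L}$ on $Z_\bullet$. The log Higgs bundle $(\Omega^1_{X'}(\log D')\oplus\mathcal{O},\theta)$ is the system of Hodge bundles attached to the uniformizing variation of Hodge structure of weight one on $U$, given by the standard representation $\Gamma'\to\mathrm{PU}(d,1)$ lifted to $\mathrm{U}(d,1)$ after passing to a finite cover. Its monodromy at the boundary is unipotent, so the Hodge metric is tame and has trivial parabolic weights. Pulling back along $Z_\bullet\to\mathscr{X}$ gives a tame harmonic bundle on the complement of $f^*\mathscr{D}$, corresponding to a semisimple local system because the Zariski-dense representation is irreducible. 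By Mochizuki's correspondence (as packaged in \cref{correspondence}), a tame harmonic bundle with semisimple monodromy yields a $\mu_{\mathcal{L}}$-polystable parabolic Higgs bundle of vanishing parabolic Chern classes, for \emph{any} polarization. With trivial weights and snc $f^*\mathscr{D}$, which the good covering guarantees, this is precisely the pullback of our Higgs bundle. Hence the Higgs bundle is polystable with respect to $\mathcal{L}$.

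\textbf{Main obstacle.} The hardest step is this last one, and specifically checking that the pullback to $Z_\bullet$ is unaffected by ramification. $f$ need not be étale along $f^*\mathscr{D}$, so the pulled-back local system could acquire non-unipotent or nontrivial parabolic structure, and $f^*\Omega^1_{\mathscr{X}}(\log\mathscr{D})$ could differ from $\Omega^1_{Z}(\log f^*\mathscr{D}_{\mathrm{red}})$. I would handle this by working with $f^*$ of the Higgs bundle throughout, rather than with the log cotangent bundle of $Z$. The unipotence of the monodromy of $\Gamma'$ is preserved under pullback, so the Deligne extension commutes with $f^*$ and the weights stay trivial. Where $f$ is not surjective-where-étale, I would reduce to the components where it is, as in Simpson's setup.
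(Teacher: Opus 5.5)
Your construction of $\mathscr{X}=[X'/G]$ and your proof of the Chern class equality agree with the paper: you pull back to the \'etale chart $X'$ and invoke Deng--Cadorel. The gap is in the polystability step.

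\textbf{The degree obstruction.} The bundle $\mathcal{E}=\Omega^1_{\mathscr{X}}(\log\mathscr{D})\oplus\mathcal{O}_{\mathscr{X}}$ has $c_1(\mathcal{E})=c_1(\Omega^1_{\mathscr{X}}(\log\mathscr{D}))$. This log canonical class is big, so $c_1(f_k^*\mathcal{E})\cdot c_1(\mathcal{L}_k)^{d-1}>0$ on every component of $Z_k$ where $f_k$ is generically \'etale. A tame harmonic bundle, by contrast, has a prolongation of parabolic degree zero. This is also why Theorem~\ref{correspondence}, which you invoke, assumes $c_1\cdot c_1(\mathcal{L})^{d-1}=0$. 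So $f_k^*\mathcal{E}$ is never the trivial-weight prolongation of a tame harmonic bundle, and your claim that it is ``precisely the pullback of our Higgs bundle'' is false.

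\textbf{Why the lift does not rescue it.} The uniformizing metric on $\mathcal{E}|_{\mathscr{X}-\mathscr{D}}$ is only projectively flat, since it comes from $\Gamma\to\mathrm{PU}(d,1)$. An honest variation of Hodge structure needs a lift to $\mathrm{U}(d,1)$ together with a twist $\mathcal{E}\otimes M$, where $M^{\otimes(d+1)}$ is numerically $\det(\mathcal{E})^{-1}$. As you note, both exist only over a finite-index $\Gamma''$. The resulting harmonic bundle therefore lives on a finite cover of $\mathscr{X}-\mathscr{D}$, whose compactification may ramify along $\mathscr{D}$. It cannot simply be pulled back along $Z_\bullet\to\mathscr{X}$. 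You would have to base-change the good covering to that cover and resolve, keeping an ample polarization and an snc boundary. Then you would apply Mochizuki there and descend polystability. None of this is addressed.

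\textbf{How the paper avoids this, and another repair.} The paper applies \cite[Theorem 6.7]{dengcadorel} directly to $(f_k^*\mathcal{E},f_k^*\theta)$ with the pulled-back metric. That result gives polystability from a Hermitian--Yang--Mills metric that is acceptable and adapted to log order, so no flat structure, lift or twist is needed. Within your framework, you could instead run the Mochizuki argument on $\mathcal{E}nd(\mathcal{E})$. It comes from $\mathrm{Ad}\circ\rho$ on all of $\Gamma$ and has $c_1=0$. But you would still need a separate argument that polystability of $\mathcal{E}nd(f_k^*\mathcal{E})$ forces that of $f_k^*\mathcal{E}$.

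\textbf{Your ramification analysis.} This part is worth keeping. It is exactly what is needed to check that acceptability and log-order adaptedness survive pullback, which the paper asserts without comment. The reason unipotence is preserved is that $Z_k$ is a scheme. So $f_k$ lifts locally to the \'etale chart $X'$, and the local monodromies land in the neat group $\Gamma'$.

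\textbf{Two smaller points.}
\begin{enumerate}
\item Definition~\ref{Potential-ss} requires polystability on every component of every $Z_k$. You cannot ``reduce to the components where $f$ is surjective where \'etale''.
\item On non-dominant components the pulled-back representation need not be Zariski dense. Semisimplicity should come from the pulled-back harmonic metric, not from irreducibility.
\end{enumerate}
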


\section*{Acknowledgements}
AP acknowledges support from the Deutsche Forschungsgemeinschaft (DFG, German Research Foundation) -- Project ID 286237555 (TRR 195) and Project ID 530132094. He also thanks Beno\^it Cadorel for helpful discussions. CC acknowledges support by the European Research Council (ERC) under Horizon Europe (grant agreement nº 101040935), by the Deutsche Forschungsgemeinschaft (DFG, German Research Foundation) TRR 326 \textit{Geometry and Arithmetic of Uniformized Structures}, project number 444845124 and the LOEWE professorship in Algebra, project number LOEWE/4b//519/05/01.002(0004)/87. MC was supported by the DFG Eigenestelle project  520961294, \emph{Die Geometrie der Strata von Differentialen}.

\section{Log Higgs bundles}

In this section, we introduce the objects that will be used to prove the main result, starting with log Higgs bundles. Any object on a smooth DM-stack $\mathscr{X}$ can be regarded as a collection of compatible objects on a collection of charts (which are smooth quasi-projective schemes by definition) that cover $\mathscr{X}$. For simplicity and convenience of the reader, we define all objects as defined over varieties.

A \emph{log pair} $(\mathscr{X},\mathscr{D})$ is a smooth DM-stack $\mathscr{X}$ of finite type and an snc divisor $\mathscr{D}\subset\mathscr{X}$. A log pair is called \emph{proper} if $\mathscr{X}$ is proper.

\begin{definition}
Let $(\mathscr{X},\mathscr{D})$ be a log pair. A \emph{log Higgs bundle} on $(\mathscr{X},\mathscr{D})$ is a pair $(\mathcal{E},\theta)$, where $\mathcal{E}$ is a vector bundle on $\mathscr{X}$ and $\theta:\mathcal{E}\to\mathcal{E}\otimes\Omega^1_{\mathscr{X}}(\log \mathscr{D})$ is an $\mathcal{O}_\mathscr{X}$-linear morphism such that $\theta\wedge\theta=0\in H^0(\mathscr{X},\mathcal{E}nd(\mathcal{E})\otimes\Omega^2_{\mathscr{X}}(\log \mathscr{D}))$. 
\end{definition}

When $\mathscr{D}=0$ we recover the notion of a Higgs bundle. Note that the restriction $(\mathcal{E}|_{\mathscr{X}-\mathscr{D}},\theta|_{\mathscr{X}-\mathscr{D}})$ is a Higgs bundle on $\mathscr{X}-\mathscr{D}$. 

Let $h$ be a smooth Hermitian metric on $\mathcal{E}|_{\mathscr{X}-\mathscr{D}}$. Let $d_h$ denote the Chern connection of $(\mathcal{E}|_{\mathscr{X}-\mathscr{D}},h)$ and let $\overline{\theta_h}$ denote the adjoint of $\theta$ with respect to $h$. Then the metric $h$ is called \emph{harmonic} if the curvature of the connection $D_h:=d_h+\theta_h+\overline{\theta_h}$ on $\mathcal{E}|_{\mathscr{X}-\mathscr{D}}$ is zero.

A Higgs bundle $(\mathcal{F},\phi)$ on $\mathscr{X}-\mathscr{D}$ is called \emph{harmonic} if $\mathcal{F}$ admits a harmonic metric.

\begin{definition}
    A log Higgs bundle $(\mathcal{E},\theta)$ on a log pair $(\mathscr{X},\mathscr{D})$ is said to be a \emph{system of log Hodge bundles} if $\mathcal{E}$ admits a decomposition $\mathcal{E}=\bigoplus_{p,q}\mathcal{E}^{p,q}$ such that $\theta(\mathcal{E}^{p,q})\subset\mathcal{E}^{p+1,q-1}\otimes\Omega^1_X(\log\mathscr{D})$ for all $p,q$.
\end{definition}

\begin{definition}\label{def-Hodge-metric}
    Let $(\mathcal{E}=\bigoplus_{p,q}\mathcal{E}^{p,q},\theta)$ be a system of log Hodge bundles on a log pair $(\mathscr{X},\mathscr{D})$. A hermitian metric $h$ on $\mathcal{E}$ is said to be a \emph{Hodge metric} if $h$ is harmonic and is a direct sum $h=\bigoplus_{p,q}h^{p,q}$, where $h^{p,q}$ is a metric on the bundle $\mathcal{E}^{p,q}$.
\end{definition}

We recall the definition of a Hodge group from \cite[Section 8]{Simpson_unif} and \cite[Section 3]{dengcadorel}. 

\begin{definition}
A semisimple real algebraic group $G_0$ is called a \emph{Hodge group} if the complexified Lie algebra $\mathfrak{g}=\mathrm{Lie}(G_0)\otimes_\mathbb{R}\mathbb{C}$ admits a Hodge decomposition given by 
\[
\mathfrak{g}=\bigoplus_p\mathfrak{g}^{p,-p}
\]
so that $[\mathfrak{g}^{p,-p},\mathfrak{g}^{q,-q}]\subset\mathfrak{g}^{p+q,-p-q}$ and $\overline{\mathfrak{g}^{p,-p}}=\mathfrak{g}^{-p,p}$, where $[.,.]$ denotes the Lie bracket of $\mathfrak{g}$ and $\overline{*}$ denotes complex conjugation. Moreover, the form 
\[
h_{\mathfrak{g}}(U,V):=(-1)^{p+1}\mathrm{Tr}(\mathrm{ad}(U)\mathrm{ad}(V))
\]
is a positive definite Hermitian form on $\mathfrak{g}$ for all $U,V\in\mathfrak{g}^{p,-p}$.
\end{definition}

Let $G_0$ be a Hodge group as above and let $K_0\subset G_0$ be the subgroup corresponding to the Lie algebra $\mathfrak{k}_0:=\mathfrak{g}_0\cap\mathfrak{g}^{0,0}$. We denote by $K$ the complexification of $K_0$ and by $\mathfrak{k}$ its Lie algebra. Note that $K_0$ is a compact real subgroup of $G_0$. 


\begin{definition}\label{Principal hodge bundle}
Let $(\mathscr{X},\mathscr{D})$ be a log pair. A \emph{principal system of log Hodge bundles} on $(\mathscr{X},\mathscr{D})$ for a Hodge group $G_0$ is a pair $(P,\tau)$, where $P$ is a principal $K$-bundle on $\mathscr{X}$ together with a morphism of vector bundles
\[
\tau:\mathcal{T}_{\mathscr{X}}(-\log \mathscr{D})\to P\times_K\mathfrak{g}^{-1,1}
\]
such that $[\tau(u),\tau(v)]=0$ for all $u,v\in\mathcal{T}_{\mathscr{X}}(-\log \mathscr{D})$. 
\end{definition}

The restriction $(P|_{\mathscr{X}-\mathscr{D}},\tau|_{\mathscr{X}-\mathscr{D}})$ is a principal system of Hodge bundles on $\mathscr{X}-\mathscr{D}$. A \emph{metric} for $P|_{\mathscr{X}-\mathscr{D}}$ is a principal subbundle $P_H\subset P|_{\mathscr{X}-\mathscr{D}}$ whose structure group is $K_0\subset K$ i.e., it is a reduction of structure group of $P|_{\mathscr{X}-\mathscr{D}}$ from $K$ to $K_0$. The metric $P_H$ induces a connection $D_H$ on the principal $G_0$-bundle $P_H\times_{K_0}G_0$; we refer the reader to \cite[Definition 3.2]{dengcadorel} for details.

\begin{definition}\label{Principal VHS}
Let $(\mathscr{X},\mathscr{D})$ be a log pair. Let $(P,\tau)$ be a principal system of log Hodge bundles on $(\mathscr{X},\mathscr{D})$ and let $P_H$ be a metric for $P$. Then the triple $(P|_{\mathscr{X}-\mathscr{D}},\tau|_{\mathscr{X}-\mathscr{D}},P_H)$ is a \emph{principal variation of Hodge structure} on $\mathscr{X}-\mathscr{D}$ if the connection $D_H$ on the bundle $P_H\times_{K_0}G_0$ is flat i.e., if the curvature of $D_H$ is zero.
\end{definition}

The metric reduction $P_H$ for a principal system of log Hodge bundles $(P,\tau)$ on $\mathscr{X}-\mathscr{D}$ induces a Hermitian metric $h_H$ on the bundle $P\times_K\mathfrak{g}$ given by 
\[
h_H((p,u),(p,v))=h_{\mathfrak{g}}(u,v)
\]
for any $p\in P_H$ and $u,v\in\mathfrak{g}$.

\begin{definition}
A Hodge group $G_0$ is said to be of \emph{Hermitian type} if $G_0$ has no compact factor and if the Hodge decomposition of the Lie algebra $\mathfrak{g}$ of $G$ admits a direct sum decomposition given by
\[
\mathfrak{g}=\mathfrak{g}^{-1,1}\oplus\mathfrak{g}^{0,0}\oplus\mathfrak{g}^{1,-1}
\]
In this case, $K_0\subset G_0$ is the maximal compact subgroup (unique up to conjugation) and $\mathcal{D}:=G_0/K_0$ is a Hermitian symmetric space of non-compact type (i.e. a bounded symmetric domain).
\end{definition}

\begin{definition}\label{Uniformizng VHS}
A \emph{uniformizing system of log Hodge bundles} on a log pair $(\mathscr{X},\mathscr{D})$ is a principal system of log Hodge bundles $(P,\tau)$ such that the morphism $\tau:\mathcal{T}_{\mathscr{X}}(-\log \mathscr{D})\to P\times_K\mathfrak{g}^{-1,1}$ is an isomorphism. 

Moreover, if $(P|_{\mathscr{X}-\mathscr{D}},\tau|_{\mathscr{X}-\mathscr{D}})$ admits a metric reduction $P_H$ with structure group $K_0$ such that the curvature of the connection $D_H$ on the $G_0$-bundle $P_H\times_{K_0}G_0$ is zero, then the triple $(P|_{\mathscr{X}-\mathscr{D}},\tau|_{\mathscr{X}-\mathscr{D}},P_H)$ is a \emph{uniformizing variation of Hodge structure} on $\mathscr{X}-\mathscr{D}$.
\end{definition}

\section{Proper surjective hypercoverings}

In this section, we introduce the notion of a good covering of a proper log pair and we show that such a covering always exists, following Simpson's ideas in \cite[Section 5]{simpsonstacks}.
We begin with the following analog of \cite[Theorem 5.4]{simpsonstacks}.

\begin{lemma}\label{Surj-etale-lem}
Let $\mathscr{X}$ be a smooth proper DM-stack of finite type and let $\mathscr{D}\subset \mathscr{X}$ be an snc divisor. Then there is a morphism $g:Z\to \mathscr{X}$ such that $Z$ is a smooth projective variety, $g$ is surjective where \'etale, and the reduced divisor underlying $g^*\mathscr{D}$ in $Z$ is snc.
\end{lemma}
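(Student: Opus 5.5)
Our plan is to start from Simpson's construction in \cite[Theorem 5.4]{simpsonstacks} and then modify it by a log resolution. Simpson's theorem gives a smooth projective variety $Z_0$ and a morphism $g_0:Z_0\to\mathscr{X}$ that is surjective where \'etale. Let $U_0\subset Z_0$ be the open locus where $g_0$ is \'etale; by hypothesis $g_0(U_0)=\mathscr{X}$. Since $g_0|_{U_0}$ is \'etale and $\mathscr{D}$ is snc on $\mathscr{X}$, the pullback $g_0^*\mathscr{D}|_{U_0}$ is already an snc divisor on $U_0$. Being snc (rather than merely normal crossings) is not automatically preserved by \'etale pullback, but it is Zariski-locally true there. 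The only trouble is therefore the locus $Z_0-U_0$, where $g_0$ ramifies and $(g_0^*\mathscr{D})_{\mathrm{red}}$ may acquire bad singularities.

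Next we apply Hironaka's embedded resolution (log resolution) to the pair $(Z_0,(g_0^*\mathscr{D})_{\mathrm{red}})$. This gives a projective birational morphism $\pi:Z\to Z_0$ with $Z$ smooth projective, such that $\pi^{-1}((g_0^*\mathscr{D})_{\mathrm{red}})\cup\mathrm{Exc}(\pi)$ is snc. We set $g:=g_0\circ\pi$.

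The key point is to choose the resolution to be an isomorphism over the open set where the divisor is already snc. Functorial resolution provides this, since it only blows up centers lying over the non-snc locus. To be safe, we first refine $U_0$ to the open subset $U\subset U_0$ where the divisor is snc in the strict sense. We then check that $g_0(U)=\mathscr{X}$ still holds. If not, we replace $Z_0$ by a disjoint union of finitely many copies, or further Simpson covers, chosen so that their snc loci jointly surject onto $\mathscr{X}$. A finite disjoint union of smooth projective varieties is still acceptable: it is projective, and it is the form used for $Z_\bullet$.

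With these choices, $\pi$ is an isomorphism over $U$, so $g$ is \'etale on $\pi^{-1}(U)$ and $g(\pi^{-1}(U))=\mathscr{X}$. Hence $g$ is surjective where \'etale. Finally, $(g^*\mathscr{D})_{\mathrm{red}}$ is contained in $\pi^{-1}((g_0^*\mathscr{D})_{\mathrm{red}})\cup\mathrm{Exc}(\pi)$ and is a union of some of its components. A subdivisor of an snc divisor is snc, so $(g^*\mathscr{D})_{\mathrm{red}}$ is snc.

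The main obstacle is ensuring that the resolution does not destroy the \'etale-surjective locus, i.e. that the snc locus of the étale part still surjects. We handle this with the local statement that étale pullback of snc is normal crossings, together with a further Zariski refinement, or by passing to a normalization of the $2$-fibre product that splits the branches, as in Simpson's construction.
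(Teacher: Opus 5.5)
Your argument is essentially the paper's proof: start from Simpson's cover $g_0:Z_0\to\mathscr{X}$, take a log resolution $\pi:Z\to Z_0$ of $(g_0^*\mathscr{D})_{\mathrm{red}}$ that is an isomorphism over the \'etale locus $U_0$, and set $g=g_0\circ\pi$. The only correction is that the obstacle you single out does not exist, because \'etale pullback \emph{does} preserve snc: each component $\mathscr{D}_i$ is smooth, so $g_0^*\mathscr{D}_i|_{U_0}$ is smooth and hence a disjoint union of smooth irreducible divisors, and transversality is \'etale-local; thus $U=U_0$ surjects onto $\mathscr{X}$ automatically, and the unjustified fallback via disjoint unions or branch-splitting fibre products should be deleted, since what fails for snc is \'etale descent, not \'etale pullback.
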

\begin{proof}
By \cite[Theorem 5.4]{simpsonstacks}, there is a smooth projective variety $W$ and a morphism $w:W\to\mathscr{X}$ that is surjective where \'etale. In other words, there is an open subset $W'\subset W$ such that the restricted morphism $W'\to\mathscr{X}$ is an \'etale cover.

Since \'etale morphisms are smooth, the locus where the divisor $w^*\mathscr{D}$ on $W$ is not snc is contained in the complement $W-W'$. We can blow up $W$ to get a morphism $z:Z\to W$ such that $Z$ is a smooth projective variety and $z^*w^*\mathscr{D}$ is an snc divisor on $Z$. 

Let $g:=w\circ z:Z\to\mathscr{X}$ be the composition. Since $z|_{z^{-1}W'}:z^{-1}W'\to W'$ is an isomorphism by construction, we conclude that the morphism $g$ is surjective where \'etale as desired.
\end{proof}

Using Lemma \ref{Surj-etale-lem} as a starting point, we can follow Simpson's construction in \cite[Section 5]{simpsonstacks} to obtain a proper surjective hypercovering $f:Z_\bullet\to \mathscr{X}$ by smooth projective varieties such that the reduced divisor underlying $f^*\mathscr{D}$ in $Z_\bullet$ is snc. For completeness, we recall the notion of a hypercovering in the appendix. We obtain the following analog of \cite[Theorem 5.8]{simpsonstacks}.

\begin{lemma}\label{Hypercover-lem}
    Let $\mathscr{X}$ be a smooth proper DM-stack of finite type and $\mathscr{D} \subset \mathscr{X}$ be an snc divisor. Then there exists a split proper surjective hypercovering $f:Z_{\bullet} \to \mathscr{X}$ along with an \'etale subhypercovering $U_{\bullet} \to \mathscr{X}$ with the property that $(Z_k,D_k)$ is a projective log pair. Here $D_k:=f_k^{-1}(\mathscr{D})$ and $f_k: Z_k \to \mathscr{X}$ are the levels of $Z_\bullet$.
\end{lemma}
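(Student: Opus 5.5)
We follow Simpson's inductive construction of split hypercoverings in \cite[Section 5]{simpsonstacks}, replacing his input \cite[Theorem 5.4]{simpsonstacks} at each stage by Lemma \ref{Surj-etale-lem}. Recall that a split simplicial object is determined by a sequence of ``new'' pieces $N_k$ with $Z_k=\coprod_{[k]\twoheadrightarrow[m]} N_m$, and that $Z_\bullet$ is a hypercovering once each map $Z_{k+1}\to(\mathrm{cosk}_k Z_\bullet)_{k+1}$ is proper surjective. We build $Z_\bullet$ and an open subobject $U_\bullet\subset Z_\bullet$ simultaneously, by induction on $k$.

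\textbf{Base step.} Apply Lemma \ref{Surj-etale-lem} to $(\mathscr{X},\mathscr{D})$. This gives $Z_0:=Z$, smooth projective, with $f_0:Z_0\to\mathscr{X}$ surjective where \'etale, the reduced divisor $D_0$ underlying $f_0^{-1}\mathscr{D}$ snc, and $U_0\subset Z_0$ the open locus on which $f_0$ is \'etale and still surjective.

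\textbf{Inductive step.} Suppose $Z_{\le k}$ and $U_{\le k}$ have been built. Form the coskeleton term $C:=(\mathrm{cosk}_kZ_\bullet)_{k+1}$, a finite limit of fibre products of the $Z_i$ over $\mathscr{X}$. It is a proper DM-stack (in fact an algebraic space, projective over $\mathscr{X}$) but possibly singular. The corresponding term $C^U:=(\mathrm{cosk}_kU_\bullet)_{k+1}$ is an open substack of $C$. Since it is built from fibre products of \'etale maps, $C^U$ is \'etale over $\mathscr{X}$ and hence smooth.

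We need a smooth projective $N_{k+1}$ with a proper map to $C$ that is surjective where \'etale onto $C$, in the sense that it is \'etale and surjective over $C^U$. Since $C$ is singular, we cannot apply Lemma \ref{Surj-etale-lem} directly. Instead, following Simpson, we apply \cite[Theorem 5.4]{simpsonstacks} (or its proof via Chow's lemma and resolution) to each irreducible component of $C$ meeting $C^U$. Resolution is an isomorphism over the smooth open set $C^U$. We then blow up further, away from the preimage of $C^U$, so that the reduced preimage of $\mathscr{D}$ becomes snc. This is possible because the preimage of $\mathscr{D}$ in $C^U$ is already snc, $C^U\to\mathscr{X}$ being \'etale. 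The argument is the same as in Lemma \ref{Surj-etale-lem}.

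To guarantee surjectivity onto all of $C$, we add a disjoint union of resolutions of the remaining components of $C$, again log-resolved for $\mathscr{D}$. Setting $Z_{k+1}$ to be the split term built from $N_{k+1}$ and the degeneracies of $Z_{\le k}$, and $U_{k+1}$ the corresponding open part, completes the induction. Each level is a finite disjoint union of smooth projective varieties with snc boundary divisor, so $(Z_k,D_k)$ is a projective log pair for every $k$.

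\textbf{Main obstacle.} The delicate point is checking that $U_\bullet$ is genuinely an \'etale hypercovering of $\mathscr{X}$. This requires that $U_{k+1}\to(\mathrm{cosk}_kU_\bullet)_{k+1}$ be surjective, and that all modifications (resolution and log resolution) are isomorphisms over $C^U$. This bookkeeping, including compatibility with the degeneracy maps in the split structure, is exactly what Simpson carries out in \cite[Theorem 5.8]{simpsonstacks}. The only new ingredient here is that the extra blow-ups for the snc condition are centred outside the \'etale locus, and so do not disturb his argument.
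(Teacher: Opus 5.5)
Your route is the same as the paper's. You start from Lemma~\ref{Surj-etale-lem}, resolve the coskeleton term $C=(\operatorname{cosk}_kZ_{\le k})_{k+1}$ at each stage, glue the result in as the new non-degenerate piece of a split object, and take $U_{k+1}$ to be the part lying over $C^U=(\operatorname{cosk}_kU_{\le k})_{k+1}$. You are in fact more careful than the paper in requiring every modification to be an isomorphism over the smooth \'etale locus $C^U$. For this, note that $C$ is a projective scheme: it is closed in a fibre power of $Z_k$ over $\mathscr{X}$, and that fibre power is finite over the ordinary power because the diagonal of the separated DM-stack $\mathscr{X}$ is finite. So Hironaka's resolution, which is an isomorphism over the smooth locus, is what you want. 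Chow's lemma would only give an isomorphism over some dense open subset.

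There is, however, a step that fails as written: ``add resolutions of the remaining components of $C$, again log-resolved for $\mathscr{D}$''. A component of $C$ not meeting $C^U$ can lie \emph{entirely} over $\mathscr{D}$. Since $Z_{k+1}\to C$ must be surjective, some component of $Z_{k+1}$ then maps into $\mathscr{D}$. Hence $D_{k+1}=f_{k+1}^{-1}(\mathscr{D})$ contains a whole component of $Z_{k+1}$, and $(Z_{k+1},D_{k+1})$ is not a log pair, whatever blow-ups you perform.

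This really occurs. Let $\mathscr{X}=X$ be smooth projective of dimension $d\ge 2$, $\mathscr{D}=D$ smooth, $p\in D$, and $Z_0=X\sqcup\mathrm{Bl}_pX$ with $U_0=X\sqcup(\mathrm{Bl}_pX-E)$. This $Z_0$ satisfies every conclusion of Lemma~\ref{Surj-etale-lem}. Yet $Z_0\times_XZ_0$ has the irreducible component $E\times E\cong\mathbb{P}^{d-1}\times\mathbb{P}^{d-1}$, of dimension $2d-2\ge d$, lying over $p\in D$. Divisors contracted into $\mathscr{D}$ can produce such components, and the blow-ups in Lemma~\ref{Surj-etale-lem} and the resolutions in your inductive step can create exactly such divisors.

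The paper's proof has the same hole: its step (2) asserts that $\widetilde D_{m+1}$ is an snc divisor in the possibly singular, reducible coskeleton term. So this is not a departure from the paper, but closing it needs an extra input. One option: if $Z_0\to\mathscr{X}$ can be chosen \'etale over a neighbourhood of $\mathscr{D}$, then by induction every level and every coskeleton term is \'etale over that neighbourhood. Your modifications are then isomorphisms there, and no component of any $Z_k$ maps into $\mathscr{D}$. Otherwise the statement must be weakened, e.g.\ by imposing the log-pair condition only on components of $Z_k$ that do not map into $\mathscr{D}$, and one must check that the later arguments tolerate this.
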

\begin{proof}
By Lemma \ref{Surj-etale-lem}, we have $f_0:(Z_0,D_0) \to (\mathscr{X},\mathscr{D})$ such that $(Z_0,D_0)$ is projective log pair and $f_0$ is surjective where \'etale i.e there exists $U_0 \subset Z_0 \to \mathscr{X}$ \'etale. \\
Let $\operatorname{PropSmDMSt}$ denote the category of smooth proper DM-stacks. Following the notation in Definition \ref{defa1}, suppose that we have constructed $(Z_{\le m}, U_{\le m}) \in \operatorname{Simp}_m(\operatorname{PropSmDMSt})$ satisfying the conditions of the theorem. We would like to construct an $m+1$ truncated object that satisfies similar conditions. We construct this in the following steps: 
\begin{enumerate}
    \item The simplicial object $\operatorname{coskn}_m (Z_{\le m})$ is a proper hypercovering and each of the terms is proper over the base (Proposition \ref{propertyofPhypercoverings}).
    \item Let $\widetilde{D}_{m+1} := \coprod_{i=0}^m (d^m_i)^{-1}(D_m)$. This is a snc divisor in $ \operatorname{coskn}_m(Z_{\le m})_{m+1}$.
    \item Let $N$ be the resolution of singularities of $(\operatorname{coskn}_m Z_{\le m})_{m+1}$ along $\widetilde{D}_{m+1}$  This comes with a map :
    \begin{equation*}
        \beta : N \to (\operatorname{coskn}_m (Z_{\le m}))_{m+1}.
    \end{equation*}
    By Proposition \ref{propertyofPhypercoverings}, let $Z_{\le m+1}$ be the $(m+1)$-split-truncated simplicial object such that $\alpha(Z_{\le m+1})= (Z_{\le m},N,\beta)$. By construction, $N$ is projective and smooth. 
    \item The map $\beta$ produces a snc divisor $D_{m+1}$ on $Z_{\le m+1}$ such that $D_{m+1}=f_{m+1}^{-1}D$.
    \item Let $U_{\le m+1}$ be the $(m+1)$-truncated simplicial object that arises from $(U_{\le m},N',\beta')$ where $\beta': \beta^{-1}(\operatorname{coskn}_m(U_{\le m})_{m+1}) \to  \operatorname{coskn}_m(U_{\le m})_{m+1}$ (see Proposition \ref{propertyofPhypercoverings}). 
    \item This completes the construction of the pair $(Z_{\le m+1},U_{\le m+1})$ along with $D_{m+1}=f_{m+1}^{-1}(D)$.
\end{enumerate}
\end{proof}

In particular, the above hypercovering also restricts to an proper surjective hypercovering on the complement $\mathscr{X}-\mathscr{D}$ via the following lemma.

\begin{lemma}\label{hypercoveropen-lem}
    The above hypercovering $f:Z_{\bullet} \to \mathscr{X}$ restricts to a surjective proper hypercovering $Z_\bullet-f^*\mathscr{D} \to \mathscr{X}-\mathscr{D}$.
\end{lemma}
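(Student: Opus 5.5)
The plan is to check that the hypercovering property is preserved under base change along the open immersion $j:\mathscr{X}-\mathscr{D}\hookrightarrow\mathscr{X}$. Set $Z_k^\circ:=Z_k-D_k=f_k^{-1}(\mathscr{X}-\mathscr{D})$. Each $Z_k^\circ$ is the fibre product $Z_k\times_{\mathscr{X}}(\mathscr{X}-\mathscr{D})$. The face and degeneracy maps of $Z_\bullet$ commute with the augmentations $f_k$, so they map $Z_k^\circ$ into $Z_{k'}^\circ$. Hence $Z^\circ_\bullet$ is a simplicial sub-object of $Z_\bullet$ augmented over $\mathscr{X}-\mathscr{D}$; in other words, $Z^\circ_\bullet=Z_\bullet\times_{\mathscr{X}}(\mathscr{X}-\mathscr{D})$ levelwise.

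The key point is that coskeleta commute with this base change. Coskeleta are finite limits over $\mathscr{X}$, namely iterated fibre products of the $Z_i$ over $\mathscr{X}$, and fibre product with the open substack $\mathscr{X}-\mathscr{D}$ commutes with finite limits. This gives
\[
(\operatorname{coskn}_m Z^\circ_{\le m})_{m+1}\cong(\operatorname{coskn}_m Z_{\le m})_{m+1}\times_{\mathscr{X}}(\mathscr{X}-\mathscr{D}),
\]
where the coskeleton of $Z^\circ_{\le m}$ is taken over $\mathscr{X}-\mathscr{D}$. The canonical map $Z^\circ_{m+1}\to(\operatorname{coskn}_m Z^\circ_{\le m})_{m+1}$ is therefore the base change of $Z_{m+1}\to(\operatorname{coskn}_m Z_{\le m})_{m+1}$ along an open immersion. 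Proper surjective morphisms are stable under base change, so each of these maps is proper and surjective. This is exactly the condition for a proper surjective hypercovering recalled in Definition \ref{defa1} and Proposition \ref{propertyofPhypercoverings}. The same argument with the \'etale subhypercovering $U_\bullet$ shows that $U_\bullet-f^*\mathscr{D}$ is an \'etale subhypercovering of $\mathscr{X}-\mathscr{D}$. In level $0$ this uses that $U_0\cap Z_0^\circ\to\mathscr{X}-\mathscr{D}$ is still surjective, which holds because $U_0\to\mathscr{X}$ is surjective.

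The main obstacle I expect is reconciling this with the inductive construction in Lemma \ref{Hypercover-lem}. There $Z_{m+1}$ is not literally the coskeleton; it is obtained through $\alpha(Z_{\le m+1})=(Z_{\le m},N,\beta)$, with $N$ a resolution, and in the split form $Z_{m+1}$ also contains degenerate pieces. I must check that removing $f_{m+1}^{-1}\mathscr{D}$ commutes with this splitting. I would argue this using $D_{m+1}=f_{m+1}^{-1}\mathscr{D}$, which holds by step (4) of that construction. Given this identity, $N^\circ:=\beta^{-1}\bigl((\operatorname{coskn}_m Z_{\le m})_{m+1}\times_{\mathscr{X}}(\mathscr{X}-\mathscr{D})\bigr)$, and the restricted map $\beta^\circ$ is still proper and surjective. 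Hence $(Z^\circ_{\le m},N^\circ,\beta^\circ)$ is the datum of the split truncation of $Z^\circ_{\le m+1}$. Induction on $m$ then concludes. Properness here is meant relative to $\mathscr{X}-\mathscr{D}$, not absolute, since the $Z_k^\circ$ are quasi-projective.
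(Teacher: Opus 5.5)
Your proposal is correct and follows the same route as the paper. The paper's proof invokes stability of proper surjective hypercoverings under base change along $\mathscr{X}-\mathscr{D}\hookrightarrow\mathscr{X}$ and notes that the restricted $\beta$ stays proper; your observation that the coskeleta $(\operatorname{coskn}_m Z_{\le m})_{m+1}$ commute with this base change is exactly the justification the paper leaves implicit. Your final paragraph on the splitting is consistent with the paper's remark about $\beta$ but not strictly needed, because the hypercovering condition only involves the intrinsic maps $Z^\circ_{m+1}\to(\operatorname{coskn}_m Z^\circ_{\le m})_{m+1}$.
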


\begin{proof}
    This follows from the fact that the proper surjective hypercoverings are stable under base change. In particular, one sees that $\beta|_{\operatorname{coskn}_m(U_{\le m})_{m+1}}$ is proper, and this gives us a proper surjective hypercovering. 
\end{proof}

The levels of the covering $V_\bullet:=Z_\bullet-f^*\mathscr{D}\to\mathscr{X}-\mathscr{D}$ are smooth quasi-projective varieties $V_k:=Z_k-f_k^*\mathscr{D}$. The conditions for a collection of objects $\{h_k\}$ on the levels $V_k$ to form a descent datum are the same as those in the case of a hypercovering for a smooth proper DM-stack, as formulated in \cite[Section 5]{simpsonstacks}. 

We call $f:Z_\bullet\to\mathscr{X}$ a \emph{good covering} for the pair $(\mathscr{X},\mathscr{D})$ if it satisfies the conditions in Lemma \ref{Hypercover-lem}. Note that given a log Higgs sheaf $(\mathcal{E},\mathcal{\theta})$ on $(\mathscr{X},\mathscr{D})$ and a good covering $f:Z_\bullet\to\mathscr{X}$, the pullback $(f^*\mathcal{E},f^*\theta)$ is a $f^*\Omega^1_{\mathscr{X}}(\log\mathscr{D})$-Higgs sheaf on $Z_\bullet$.

We use the following notion of semistability for log Higgs sheaves, which is inspired by Simpson's notion of potential semistability (\cite[Definition 9.1]{simpsonstacks}).

\begin{definition}\label{Potential-ss}
Let $(\mathscr{X},\mathscr{D})$ be a proper log pair. Let $f:Z_\bullet\to \mathscr{X}$ be a good covering for $(\mathscr{X},\mathscr{D})$ and let $\mathcal{L}$ be an ample line bundle on $Z_\bullet$. A log Higgs sheaf $(\mathcal{E},\theta)$ on $(\mathscr{X},\mathscr{D})$ is \emph{semistable} (resp. \emph{polystable}) with respect to $\mathcal{L}$ if $(f^*\mathcal{E},f^*\theta)$ is slope-semistable (resp. slope-polystable) with respect to $\mathcal{L}$ as a $f^*\Omega^1_{\mathscr{X}}(\log\mathscr{D})$-Higgs sheaf on $(Z_\bullet,f^*\mathscr{D})$.

This means that on each level $Z_k$ of $Z_\bullet$, $(f_k^*\mathcal{E},f_k^*\theta)$ is slope-semistable (resp. slope-polystable) with respect to $\mathcal{L}_k$ as a $f_k^*\Omega^1_{\mathscr{X}}(\log\mathscr{D})$-Higgs sheaf on $(Z_k,f_k^*\mathscr{D})$.
\end{definition}

Note that when $\mathscr{X}$ is a smooth projective variety, this notion is the same as (semi)stability with respect to an arbitrary polarization on $\mathscr{X}$, which is a reasonable condition.


\section{Variations of Hodge structure}

In this section, we prove the auxiliary statements that will be used to prove Theorem \ref{Uniformization-thm}. We begin with the following version of Simpson's correspondence for harmonic bundles, which generalizes Mochizuki's result \cite[Theorem 1.4]{mochizuki_asterisque}.

\begin{theorem}\label{correspondence}
Let $(\mathscr{X},\mathscr{D})$ be a proper log pair. Let $f:Z_\bullet\to\mathscr{X}$ be a good covering for $(\mathscr{X},\mathscr{D})$ and let $\mathcal{L}$ be an ample line bundle on $Z_\bullet$. 

A log Higgs bundle $(\mathcal{E},\theta)$ on $(\mathscr{X},\mathscr{D})$ is polystable with respect to $\mathcal{L}$ and satisfies the Chern class equality
\begin{align}\label{cherncleq}
c_1(f^*\mathcal{E})\cdot c_1(\mathcal{L})^{d-1}=ch_2(f^*\mathcal{E})\cdot c_1(\mathcal{L})^{d-2}=0 
\end{align}
if and only if there is a harmonic metric $h$ for $(\mathcal{E}|_{\mathscr{X}-\mathscr{D}},\theta|_{\mathscr{X}-\mathscr{D}})$ that is adapted to $(\mathcal{E},\theta)$. If $(\mathcal{E},\theta)$ is, moreover, stable with respect to $\mathcal{L}$, such a metric $h$ is unique up to scaling. 
\end{theorem}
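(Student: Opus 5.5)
The plan is to reduce the statement to Mochizuki's correspondence \cite[Theorem 1.4]{mochizuki_asterisque} on each level of the good covering, and then use descent along the hypercovering $f:Z_\bullet\to\mathscr{X}$ (Lemma \ref{Hypercover-lem}) together with the étale subhypercovering $U_\bullet$ to return to $\mathscr{X}$.

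For the direction ``polystable $\Rightarrow$ harmonic metric'', work first on $Z_0$. By Definition \ref{Potential-ss}, $(f_0^*\mathcal{E},f_0^*\theta)$ is a slope-polystable log Higgs bundle on the projective log pair $(Z_0,D_0)$; composing $f_0^*\theta$ with $f_0^*\Omega^1_{\mathscr{X}}(\log\mathscr{D})\to\Omega^1_{Z_0}(\log D_0)$ makes it an honest log Higgs bundle. Polystability transfers, because any $f_0^*\Omega^1$-Higgs subsheaf is a Higgs subsheaf for the induced field. I would first verify the converse containment on the étale locus $U_0$ and then argue by saturation and reflexivity. The vanishing \eqref{cherncleq} is the hypothesis of Mochizuki's theorem (parabolic weights are trivial since $\mathcal{E}$ is a bundle on the stack), so we obtain a pluri-harmonic metric $h_0$ on $f_0^*\mathcal{E}|_{V_0}$ adapted to the log structure.

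Next I would show that $h_0$ descends. On $V_1$ the two pullbacks $(d^1_0)^*h_0$ and $(d^1_1)^*h_0$ are both harmonic metrics adapted to the same polystable log Higgs bundle $f_1^*(\mathcal{E},\theta)$. Mochizuki's uniqueness statement says such metrics agree up to a flat automorphism, which is a scalar on each stable summand. To make them actually agree, I would first restrict to $U_\bullet$, where $f$ is étale and the harmonic metric is unique up to this ambiguity. I would then normalize: decompose $\mathcal{E}$ on $\mathscr{X}$ into stable summands, which is possible by descending the socle filtration, and choose the scalings compatibly so that the cocycle condition on $V_2$ holds. This produces a harmonic metric $h$ on $\mathcal{E}|_{\mathscr{X}-\mathscr{D}}$. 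Harmonicity and adaptedness are local, so they can be checked on the étale charts $U_0\to\mathscr{X}$.

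For the converse, pull $h$ back along each $f_k$. A pullback of a harmonic metric is harmonic, since flatness of $D_h$ is preserved. Adaptedness also persists, because the pullback is adapted with respect to $f_k^*\mathscr{D}$, which is snc. Mochizuki's theorem on $Z_k$ then gives polystability together with \eqref{cherncleq}. For uniqueness in the stable case, two harmonic metrics pulled back to $Z_0$ differ by a scalar by Mochizuki, and this constant is the same on the stack because $\mathscr{X}$ is connected. The main obstacle I expect is the descent step: controlling the scalar ambiguity on non-étale levels where $f_k$ contracts divisors. I would try to handle this by working only on $U_\bullet$, which is a genuine étale hypercover of $\mathscr{X}$, and extending by density.
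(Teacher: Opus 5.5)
Your outline is the paper's: Mochizuki's correspondence on each level $Z_k$ of the good covering, uniqueness of adapted harmonic metrics to build a descent datum that is then descended through the \'etale subhypercovering, and, for the converse, pulling $h$ back and applying \cite[Propositions 5.1--5.3]{mochizuki_asterisque} levelwise. Your check that subsheaves invariant under the induced log Higgs field are already $f_0^*\theta$-invariant, via $U_0$ and saturation, is more careful than the paper's. The gap is in the normalization step of the descent, which is where the content of the first implication lies.

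First, you cannot in general decompose $\mathcal{E}$ on $\mathscr{X}$ into summands whose pullbacks are stable. The socle of a polystable object is the whole object, so ``descending the socle filtration'' yields nothing. Moreover, an indecomposable summand on $\mathscr{X}$ may pull back to a non-stable object. For instance, let $\mathscr{X}=[Y/G]$ with $G=\{1,g\}\cong\mathbb{Z}/2$ and $Z_0=Y$. Suppose $f_0^*\mathcal{E}\cong F\oplus g^*F$ with $F\not\cong g^*F$ stable and the linearization swapping the factors. Then $\mathcal{E}$ is indecomposable, and the adapted harmonic metrics on $Y$ are $h_0=\lambda_1h_F\oplus\lambda_2\,g^*h_F$. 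On $Y\times\{g\}\subset Z_1$, the metrics $(d^1_0)^*h_0$ and $(d^1_1)^*h_0$ differ by $\lambda_1/\lambda_2$ on $F$ and by $\lambda_2/\lambda_1$ on $g^*F$. This is not a scalar unless $\lambda_1=\lambda_2$.

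In general the discrepancy is a positive self-adjoint flat automorphism of $f_1^*(\mathcal{E},\theta)$, which is a Hermitian matrix on isotypic parts with multiplicity. So rescaling by one constant per summand of $\mathcal{E}$ cannot remove it. You must rescale the stable summands of $f_0^*\mathcal{E}$ individually and keep track of how the face maps permute and mix them. The paper instead decomposes $f^*\mathcal{E}$ into stable summands on $Z_\bullet$ (see \eqref{decompo} and Claim~\ref{claim-descent}). However, a decomposition compatible with the face maps would descend to $\mathscr{X}$, so it does not exist in this example either.

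Second, even when the discrepancy on every component of $V_1$ is a positive constant, ``choose the scalings compatibly'' is exactly what has to be proved. The ratios form a locally constant $\mathbb{R}_{>0}$-valued $1$-cocycle on $\pi_0(Z_\bullet)$; the identity on level $2$ is automatic. What you need is that this cocycle is a coboundary. One route: by cohomological descent its class lies in $\ker\bigl(H^1(\mathscr{X},\mathbb{R})\to H^1(Z_0,\mathbb{R})\bigr)$, which vanishes because $f_0$ is generically finite and surjective.

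Finally, harmonic metrics on the non-compact $U_k\cap V_k$ are not unique without a boundary condition along $Z_k-U_k$; this is why the paper adds $E_\bullet$ to the divisor. In your set-up, apply Mochizuki's uniqueness instead on the components of $(Z_1,D_1)$ meeting $U_1$, where the face-map pullbacks are already adapted.
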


\begin{proof}
We split the proof into two parts, one for each implication of the statement. 

\textbf{Step I.} Suppose that $(\mathcal{E},\theta)$ is polystable with respect to $\mathcal{L}$ and satisfies the equality \ref{cherncleq}.

The pullback $(f^*\mathcal{E},f^*\theta)$ has the structure of a log Higgs bundle on $(Z_\bullet,f^*\mathscr{D})$ via the natural morphism $\delta:f^*\Omega^1_{\mathscr{X}}(\log\mathscr{D})\to\Omega^1_{Z_\bullet}(\log f^*\mathscr{D})$ (here we abuse notation by writing $f^*\theta$ for the composition $(\id_{f^*\mathcal{E}}\otimes\delta)\circ f^*\theta$). In particular, on each level $Z_k$ of $Z_\bullet$, the pair $(f_k^*\mathcal{E},f_k^*\theta)$ has the structure of a log Higgs bundle on $(Z_k,f_k^*\mathscr{D})$. 

Note that an ample line bundle $\mathcal{L}$ on $Z_\bullet$ is the same thing as an ample line bundle $\mathcal{L}_k$ on each $Z_k$ such that the $\mathcal{L}_k$'s satisfy compatibility. The hypothesis implies that $(f_k^*\mathcal{E},f_k^*\theta)$ is slope-polystable with respect to $\mathcal{L}_k$ and that the equality $c_1(f_k^*\mathcal{E})\cdot c_1(\mathcal{L}_k)^{d-1}=ch_2(f_k^*\mathcal{E})\cdot c_1(\mathcal{L}_k)^{d-2}=0$ holds on $Z_k$ for each $k$.

Consider the decomposition 
\begin{align}\label{decompo}
(f^*\mathcal{E},f^*\theta)=\bigoplus_j(\mathcal{E}_j,\theta_j)
\end{align}
of $(f^*\mathcal{E},f^*\theta)$ as a direct sum of slope-stable log Higgs bundles $(\mathcal{E}_j,\theta_j)$ on $(Z_\bullet,f^*\mathscr{D})$. A simple Chern class computation combined with the Bogomolov-Gieseker inequality for slope-stable log Higgs bundles (\cite[Theorem 6.5]{mochizuki_asterisque}) implies that the equality
\begin{align}\label{cherneq}
c_1(\mathcal{E}_j)\cdot c_1(\mathcal{L})^{d-1}=ch_2(\mathcal{E}_j)\cdot c_1(\mathcal{L})^{d-2}=0
\end{align}
holds for each j. Restricting \ref{decompo} to each $Z_k$, we obtain a direct sum decomposition $(f_k^*\mathcal{E},f_k^*\theta)=\bigoplus_{j}(\mathcal{E}_{j,k},\theta_{j,k})$, where each $(\mathcal{E}_{j,k},\theta_{j,k})$ is a slope-stable log Higgs bundle on $(Z_k,f_k^*\mathscr{D})$. 

Set $V_\bullet:=Z_\bullet-f^*\mathscr{D}$ and $V_k:=Z_k-f_k^*\mathscr{D}$. By Lemma \ref{hypercoveropen-lem}, $f|_{V_\bullet}:V_\bullet\to\mathscr{X}-\mathscr{D}$ is a proper covering that is surjective where \'etale and the levels $V_k$ are smooth quasi-projective varieties. We apply \cite[Theorem 9.4]{mochizuki_asterisque} to each $(\mathcal{E}_{j,k},\theta_{j,k})$ to obtain harmonic metrics $h_{j,k}$ on $(\mathcal{E}_{j,k}|_{V_k},\theta_{j,k}|_{V_k})$ that are adapted to $(\mathcal{E}_{j,k},\theta_{j,k})$ in the sense of \cite[Definition 2.7]{dengcadorel}, for all $k$. Moreover, each $h_{j,k}$ is unique up to scaling. 
\begin{claim}\label{claim-descent}
For each $j$, the collection of metrics $h_{j,k}$ satisfies compatibility up to scaling, and forms a descent datum for the covering $V_\bullet\to\mathscr{X}-\mathscr{D}$ in the sense of \cite[Section 5]{simpsonstacks}. 
\end{claim}
\begin{proof}
Let $U_\bullet\subset Z_\bullet$ be the open subset restricted to which $f$ is an \'etale cover, set $E_\bullet:=Z_\bullet-U_\bullet$ and $D_\bullet:=f^*\mathscr{D}+E_\bullet\subset Z_\bullet$. Note that there is a natural injective map $\iota:\Omega^1_{Z_\bullet}(\log f^*\mathscr{D})\hookrightarrow\Omega^1_{Z_\bullet}(\log D_\bullet)$. Setting $\theta':=(\id_{f^*\mathcal{E}}\otimes\iota)\circ f^*\theta$, it is straightforward to check that $(f^*\mathcal{E},\theta')$ is a log Higgs bundle on the pair $(Z_\bullet,D_\bullet)$.

It follows from the definition of $\theta'$ that every $\theta'$-invariant subsheaf of $f^*\mathcal{E}$ is also $f^*\theta$-invariant and thus $(f^*\mathcal{E},\theta')$ is also slope-polystable with respect to $\mathcal{L}$. In particular, $(\mathcal{E}_j,\theta'_j:=\theta'|_{\mathcal{E}_j})$ is a slope-stable log Higgs bundle on $(Z_\bullet,D_\bullet)$ for each $j$, and we have the decomposition 
\[
(f^*\mathcal{E},\theta')=\bigoplus_j(\mathcal{E}_j,\theta'_j)
\]
of $(f^*\mathcal{E},\theta')$ into stable direct summands. 

Let $W_\bullet:=Z_\bullet-D_\bullet$, $W_k:=Z_k-D_k$, and note that $W_\bullet=U_\bullet\cap V_\bullet\subset V_\bullet$ is the locus where $f|_{V_\bullet}$ is an \'etale cover. From the Chern class equality \ref{cherneq} and the slope stability of each $(\mathcal{E}_j,\theta'_j)$ with respect to $\mathcal{L}$, it follows again from \cite[Theorem 9.4]{mochizuki_asterisque} that for each $k$, there are harmonic metrics $h'_{j,k}$ on the Higgs bundles $(\mathcal{E}_{j,k}|_{W_k},\theta'_{j,k}|_{W_k})$ that are adapted to $(\mathcal{E}_{j,k},\theta'_{j,k})$ and are unique up to scaling.

Uniqueness up to scaling implies that we have $h'_{j,k}=h_{j,k}|_{W_k}$ for each $k$. Since $W_\bullet\to\mathscr{X}-\mathscr{D}$ is an \'etale cover, it follows that $W_k\to\mathscr{X}-\mathscr{D}$ is an \'etale cover for each $k$. Let $\pr_1,\pr_2$ denote the two projections $V_0\times_{\mathscr{X}-\mathscr{D}}V_0\to V_0$ and let $\pr'_1,\pr'_2$ denote the two projections $W_0\times_{\mathscr{X}-\mathscr{D}}W_0\to W_0$, which are \'etale. 

For each $j$, we have that $(\pr'_1)^*h'_{j,0}$ and $(\pr'_2)^*h'_{j,0}$ are both harmonic metrics on $(\mathcal{E}_{j,1}|_{W_1},\theta'_{j,1}|_{W_1})$ that agree with $h'_{0,1}$ up to scaling. This means that the metrics $(\pr_1)^*h_{j,0}$ and $(\pr_2)^*h_{j,0}$ on $(\mathcal{E}_{j,1}|_{V_1},\theta_{j,1}|_{W_1})$ agree with metric $h_{j,1}$ up to scaling because they agree on the dense open subset $W_1\subset V_1$. It follows that there is a collections of scalars $\lambda_{j,k}$ for each $j$ such that $\{\lambda_{j,k}h_{j,k}\}$ is a collection of harmonic metrics on $(\mathcal{E}_{j,k}|_{V_k},\theta_{j,k}|_{V_k})$ that are adapted to $(\mathcal{E}_{j,k},\theta_{j,k})$ and satisfy compatibility (in other words, for each $j$, the Higgs bundle $(\mathcal{E}_j|_{V_\bullet},\theta_j|_{V_\bullet})$ is \emph{strongly polarizable} in the sense of \cite[Section 7]{simpsonstacks}). Thus, for each $j$, the collection $\{\lambda_{j,k}h_{j,k}\}$ forms a descent datum for $V_\bullet\to\mathscr{X}-\mathscr{D}$ as claimed.   
\end{proof}
Hence, for each $j$, there is a harmonic metric $h_{j\bullet}$ on the Higgs bundle $(\mathcal{E}_j|_{V_\bullet},\theta_j|_{V_\bullet})$ that is adapted to $(\mathcal{E}_j,\theta_j)$. Moreover, $h_{j\bullet}$ is unique up to scaling because each $h_{j,k}$ is unique up to scaling. We have that $h_\bullet:=\bigoplus_jh_{j\bullet}$ is a harmonic metric on $(f^*\mathcal{E}|_{V_\bullet},f^*\theta|_{V_\bullet})$ that is adapted to $(f^*\mathcal{E},f^*\theta)$. The surjective where \'etale property of $f$ implies that $h_\bullet$ descends to a harmonic metric $h$ on $(\mathcal{E}|_{\mathscr{X}-\mathscr{D}},\theta|_{\mathscr{X}-\mathscr{D}})$ that is adapted to $(\mathcal{E},\theta)$. This concludes the first step of the proof. 

\textbf{Step II.} Suppose that there is a harmonic metric $h$ on $(\mathcal{E}|_{\mathscr{X}-\mathscr{D}},\theta|_{\mathscr{X}-\mathscr{D}})$ that is adapted to $(\mathcal{E},\theta)$. This pulls back to a harmonic metric $h_\bullet$ on the bundle $(f^*\mathcal{E}|_{V_\bullet},f^*\theta|_{V_\bullet})$ that is adapted to $(f^*\mathcal{E},f^*\theta)$. By definition, this is equivalent to a collection of harmonic metrics $\{h_k\}$ on $(f_k^*\mathcal{E}|_{V_k},f_k^*\theta|_{V_k})$ that are adapted to $(f_k^*\mathcal{E},f_k^*\theta)$ and satisfy compatibility.

By \cite[Propositions 5.1-5.3]{mochizuki_asterisque}, it follows that each $(f_k^*\mathcal{E},f_k^*\theta)$ is slope-polystable with respect to $\mathcal{L}_k$ as a log Higgs bundle on $(Z_k,f_k^*\mathscr{D})$ and satisfies the Chern class equality $c_1(f_k^*\mathcal{E})\cdot c_1(\mathcal{L}_k)^{d-1}=ch_2(f_k^*\mathcal{E})\cdot c_1(\mathcal{L}_k)^{d-2}=0$. By Definition \ref{Potential-ss}, this means that $(\mathcal{E},\theta)$ is polystable with respect to $\mathcal{L}$ and satisfies 
\[
c_1(f^*\mathcal{E})\cdot c_1(\mathcal{L})^{d-1}=ch_2(f^*\mathcal{E})\cdot c_1(\mathcal{L})^{d-2}=0, 
\]
which is precisely \ref{cherncleq}. This concludes the proof of the converse direction and hence of the Theorem.
\end{proof}

Next, we prove the following generalization of \cite[Proposition 2.12]{dengcadorel}, which is a refinement of the previous Theorem \ref{correspondence}. It says that a polystable system of log Hodge bundles on $(\mathscr{X},\mathscr{D})$ that satisfies the Chern class equality \ref{cherncleq} decomposes as a direct sum of stable systems of log Hodge bundles, each of which admits a Hodge metric (Definition \ref{def-Hodge-metric}) over $\mathscr{X}-\mathscr{D}$.

\begin{proposition}\label{Hodge-metric}
Let $(\mathscr{X},\mathscr{D})$ be a proper $d$-dimensional log pair. Let $f:Z_\bullet\to\mathscr{X}$ be a good covering for $(\mathscr{X},\mathscr{D})$ and let $\mathcal{L}$ be an ample line bundle on $Z_\bullet$. Suppose that $(\mathcal{E},\theta)$ is a system of log Hodge bundles on $(\mathscr{X},\mathscr{D})$ such that $(\mathcal{E},\theta)$ is polystable with respect to $\mathcal{L}$ and satisfies $c_1(f^*\mathcal{E})\cdot c_1(\mathcal{L})^{d-1}=ch_2(f^*\mathcal{E})\cdot c_1(\mathcal{L})^{d-2}=0$. 

Then there is a decomposition $(\mathcal{E},\theta)=\bigoplus_{i\in I}(\mathcal{E}_i,\theta_i)$, where each $(\mathcal{E}_i,\theta_i)$ is a system of log Hodge bundles on $(\mathscr{X},\mathscr{D})$ that is stable with respect to $\mathcal{L}$ and such that there is a Hodge metric $h_i$ (unique up to scaling) for $(\mathcal{E}_i|_{\mathscr{X}-\mathscr{D}},\theta_i|_{\mathscr{X}-\mathscr{D}})$ that is adapted to $(\mathcal{E}_i,\theta_i)$.
\end{proposition}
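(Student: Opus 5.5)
The plan is to follow Simpson's argument for \cite[Proposition 2.12]{dengcadorel} (and Simpson's original \cite{Simpson_unif}): use the $\mathbb{C}^*$-action on log Higgs bundles together with the uniqueness statement in Theorem \ref{correspondence}. First I decompose $(\mathcal{E},\theta)$ into stable pieces compatible with the Hodge grading. A system of log Hodge bundles is a fixed point of the $\mathbb{C}^*$-action $t\cdot(\mathcal{E},\theta)=(\mathcal{E},t\theta)$. The isomorphism $(\mathcal{E},\theta)\cong(\mathcal{E},t\theta)$ is given by the automorphism $\phi_t$ acting by $t^p$ on $\mathcal{E}^{p,q}$.

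Since $(\mathcal{E},\theta)$ is polystable, I take its decomposition into isotypic components. Here stability is tested on $Z_\bullet$, so the decomposition is first done for $f^*\mathcal{E}$ as in \eqref{decompo}. Because the hypercovering is split and the summands are canonical up to isomorphism, the canonical socle/isotypic decomposition descends to $\mathscr{X}$. More precisely, I would argue that the socle of a polystable object is functorial, hence compatible with the descent data.

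The $\mathbb{C}^*$-action permutes stable summands up to isomorphism. For a connected group this permutation must be trivial, so each isotypic component is preserved by $\phi_t$. The isotypic component $\mathcal{F}\otimes V$ is then graded. Inside it, a stable graded subobject isomorphic to $\mathcal{F}$ can be found by choosing a weight vector for the induced $\mathbb{C}^*$-action on $\mathrm{Hom}(\mathcal{F},\mathcal{E})$; this space is finite-dimensional and $\mathbb{C}^*$ acts on it algebraically. Iterating gives $\mathcal{E}=\bigoplus_i\mathcal{E}_i$, where each $\mathcal{E}_i$ is stable, $\phi_t$-invariant, and therefore a system of log Hodge bundles with $\mathcal{E}_i^{p,q}=\mathcal{E}_i\cap\mathcal{E}^{p,q}$. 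Each summand still satisfies \eqref{cherncleq}, by the Bogomolov--Gieseker argument already used in Step I of Theorem \ref{correspondence}.

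Next, for each $i$, Theorem \ref{correspondence} gives a harmonic metric $h_i$ on $\mathcal{E}_i|_{\mathscr{X}-\mathscr{D}}$ adapted to $(\mathcal{E}_i,\theta_i)$, unique up to scaling by stability. For $|t|=1$, the pullback metric $\phi_t^*h_i$ is harmonic for $(\mathcal{E}_i,t\theta_i)\cong(\mathcal{E}_i,\theta_i)$ and is still adapted, since $\phi_t$ is a bounded automorphism of the extended bundle. Hence $\phi_t^*h_i=c(t)h_i$ with $c(t)>0$. Because $U(1)$ is compact, $c\equiv1$. The metric $h_i$ is therefore $U(1)$-invariant, so the distinct weight spaces $\mathcal{E}_i^{p,q}$ are $h_i$-orthogonal, and $h_i=\bigoplus h_i^{p,q}$ is a Hodge metric.

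The main obstacle is the first step. The isotypic/graded decomposition must be carried out on the stack, not just on each level $Z_k$, and the summands must be shown to descend. I would handle this by performing the splitting on $\mathscr{X}$ directly, using $\mathrm{Hom}$ spaces computed on $\mathscr{X}$. Stability is detected after pullback, and pullback to the \'etale part is faithful, so the descended summands are stable with respect to $\mathcal{L}$.
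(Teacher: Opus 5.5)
\textbf{Gap: the descent of the decomposition.} The gap is in the descent step you yourself single out, and neither of your two arguments for it works.

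Functoriality of the isotypic decomposition only shows that the descent isomorphisms carry isotypic components to isotypic components. They may permute non-isomorphic types, so the individual components need not descend.

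Splitting on $\mathscr{X}$ with $\mathrm{Hom}$ computed on $\mathscr{X}$ does give summands that are indecomposable on $\mathscr{X}$. But ``stable with respect to $\mathcal{L}$'' means that every $f_k^*\mathcal{E}_i$ is slope-stable. Faithfulness of $f^*$ only gives $\mathrm{Hom}_{\mathscr{X}}\hookrightarrow\mathrm{Hom}_{Z_k}$, which is the implication in the wrong direction. As a consequence, you also cannot quote the uniqueness clause of Theorem \ref{correspondence} for your summands, and your $U(1)$-argument rests on exactly that clause.

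The remaining ingredients are sound. You make explicit the Simpson-type $\mathbb{C}^*$-argument that the paper imports from the proof of \cite[Proposition 2.12]{dengcadorel}. You rightly use isotypic components rather than a splitting into stable summands, which is not unique when multiplicities occur. You also keep $\mathcal{E}_i^{p,q}=\mathcal{E}_i\cap\mathcal{E}^{p,q}$, which Proposition \ref{Principal-VHS-prop} later needs. One further detail: the action on $\mathrm{Hom}(\mathcal{F},\mathcal{E})$ is a priori only projective. The isomorphisms $(\mathcal{F},\theta_{\mathcal{F}})\cong(\mathcal{F},t\theta_{\mathcal{F}})$ are determined only up to scalars, so the action must first be lifted to a torus action.

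\textbf{The stated decomposition need not exist.} Let $\mathscr{X}=X$ be a smooth projective variety and $\mathscr{D}=\emptyset$. Let $Z_\bullet$ be the \v{C}ech nerve of a connected Galois \'etale cover $\pi\colon Y\to X$ with group $G\neq 1$; this is a good covering with $U_\bullet=Z_\bullet$. Let $(\mathcal{E},\theta)=(\pi_*L,0)$ with $L\in\mathrm{Pic}^0(Y)$ such that $g^*L\not\cong L$ for all $g\neq 1$. Then:
\begin{enumerate}
\item $\pi^*\mathcal{E}\cong\bigoplus_{g\in G}g^*L$ is polystable with vanishing Chern numbers;
\item $G$ permutes its isotypic components transitively;
\item $\mathrm{End}_X(\mathcal{E})=\bigoplus_{g}\mathrm{Hom}(g^*L,L)=\mathbb{C}$.
\end{enumerate}
So $\mathcal{E}$ is indecomposable, yet its pullback is not stable. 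The paper's proof makes the same move as you: it asserts that the levelwise decomposition from \cite[Corollary 3.11]{mochizuki_asterisque} is unique and hence compatible across levels. Following it will therefore not close the gap.

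\textbf{A version your method can reach.} Your method is suited to the variant in which ``stable with respect to $\mathcal{L}$'' is replaced by ``simple on $\mathscr{X}$'', i.e.\ $\mathrm{End}_{\mathscr{X}}(\mathcal{E}_i,\theta_i)=\mathbb{C}$. This requires proving uniqueness of the adapted harmonic metric on $\mathscr{X}$ rather than quoting it. For two such metrics $h,h'$, the endomorphism $h^{-1}h'$ of $\mathcal{E}_i|_{\mathscr{X}-\mathscr{D}}$ is intrinsic. Mochizuki's uniqueness for the polystable bundles $f_k^*\mathcal{E}_i$ shows that it is holomorphic, commutes with $\theta_i$, and extends across $\mathscr{D}$, hence is a scalar. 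With that input, your $\mathbb{C}^*$-splitting and $U(1)$-invariance argument can be run directly on $\mathscr{X}$.
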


\begin{proof}
The hypothesis implies that each pair $(f_k^*\mathcal{E},f_k^*\theta)$ is a system of log Hodge bundles on $(Z_k,f_k^*\mathscr{D})$ that is slope-polystable with respect to $\mathcal{L}_k$ and satisfies $c_1(f_k^*\mathcal{E})\cdot c_1(\mathcal{L}_k)^{d-1}=ch_2(f_k^*\mathcal{E})\cdot c_1(\mathcal{L}_k)^{d-2}=0$.

Let $V_\bullet:=Z_\bullet-f^*\mathscr{D}$ and $V_k:=Z_k-f_k^*\mathscr{D}$ as before. We first consider the case where $(f^*\mathcal{E},f^*\theta)$ is slope-stable with respect to $\mathcal{L}$. It follows from \cite[Theorem 9.4]{mochizuki_asterisque} applied to each $(f_k^*\mathcal{E},f_k^*\theta)$ that there are harmonic metrics $h_k$ on $(f_k^*\mathcal{E}|_{V_k},f_k^*\theta|_{V_k})$ that are adapted to $(f_k^*\mathcal{E},f_k^*\theta)$ and are unique up to scaling. Claim \ref{claim-descent} says that the collection of metrics $\{h_k\}$ satisfies compatibility and thus gives a metric $h_\bullet$ on $(f^*\mathcal{E}|_{V_\bullet},f^*\theta|_{V_\bullet})$ that is adapted to $(f^*\mathcal{E},f^*\theta)$ and is unique up to scaling. This descends to a harmonic metric $h$ on $(\mathcal{E}|_{\mathscr{X}-\mathscr{D}},\theta|_{\mathscr{X}-\mathscr{D}})$ that is adapted to $(\mathcal{E},\theta)$ and is unique up to scaling. 

The computation in the proof of \cite[Proposition 2.12]{dengcadorel} implies that each $h_k$ is a Hodge metric and hence the same holds for the metrics $h_\bullet$ and $h$. This proves the proposition in the case that $(\mathcal{E},\theta)$ is stable with respect to $\mathcal{L}$.  

We now consider the general case. It follows from \cite[Corollary 3.11]{mochizuki_asterisque} that for each $k$, there is a unique direct sum decomposition 
\begin{align}\label{can-decomp}
(f_k^*\mathcal{E},f_k^*\theta)=\bigoplus_{i\in I}(\mathcal{E}_{i,k},\theta_{i,k}),
\end{align}
such that each $(\mathcal{E}_{i,k},\theta_{i,k})$ is a log Higgs bundle on $(Z_k,f_k^*\mathscr{D})$ that is slope-stable with respect to $\mathcal{L}_k$. From the proof of \cite[Proposition 2.12]{dengcadorel} it follows that each $(\mathcal{E}_{i,k},\theta_{i,k})$ in \ref{can-decomp} is in fact a system of log Hodge bundles. 

By the uniqueness of the decompositions \ref{can-decomp}, it follows that for each $i$, the collection of summands $\{(\mathcal{E}_{k,i},\theta_{k,i})\}$ satisfies compatibility and thus gives a system of log Hodge bundles $(\mathcal{E}_{i\bullet},\theta_{i\bullet})$ on $(Z_\bullet,f^*\mathscr{D})$ that is slope-stable with respect to $\mathcal{L}$. Thus on $Z_\bullet$, we obtain a direct sum decomposition 
\begin{align}\label{decomp}
(f^*\mathcal{E},f^*\theta)=\bigoplus_{i\in I}(\mathcal{E}_{i\bullet},\theta_{i\bullet}).
\end{align}
The surjective where \'etale property of $f$ implies that \ref{decomp} descends to a decomposition 
\[
(\mathcal{E},\theta)=\bigoplus_{i\in I}(\mathcal{E}_i,\theta_i),
\]
on $\mathscr{X}$, where for each $i$, $(\mathcal{E}_i,\theta_i)$ is a system of log Hodge bundles on $(\mathscr{X},\mathscr{D})$ that is stable with respect to $\mathcal{L}$.

By \cite[Theorem 9.4]{mochizuki_asterisque} again, there are harmonic metrics $h_{i,k}$ for $(\mathcal{E}_{i,k}|_{V_k},\theta_{i,k}|_{V_k})$, unique up to scaling, which are adapted to $(\mathcal{E}_{i,k},\theta_{i,k})$. From the first part of the proof, we have that each $h_{i,k}$ is in fact a Hodge metric on $(\mathcal{E}_{i,k}|_{V_k},\theta_{i,k}|_{V_k})$. We know from Claim \ref{claim-descent} that for each $i$, the collection of metrics $\{h_{i,k}\}$ satisfies compatibility to give a Hodge metric $h_{i\bullet}$ on $(\mathcal{E}_{i\bullet}|_{V_\bullet},\theta_{i\bullet}|_{V_\bullet})$ that is unique up to scaling and adapted to $(\mathcal{E}_{i\bullet},\theta_{i\bullet})$. This descends to a Hodge metric $h_i$ on $(\mathcal{E}_i|_{\mathscr{X}-\mathscr{D}},\theta_i|_{\mathscr{X}-\mathscr{D}})$ that is unique up to scaling and adapted to $(\mathcal{E},\mathcal{\theta})$, as required.
This concludes the proof.
\end{proof}

The following Lemma, which is an analog of \cite[Lemma 3.10]{dengcadorel}, says that a Hodge representation together with a principal system of log Hodge bundles corresponds to a system of log Hodge bundles. 

\begin{lemma}\label{VHS-lemma}
Let $(\mathscr{X},\mathscr{D})$ be a log pair. If $\rho:G\to\gl(V)$ is a Hodge representation of the Hodge group $G_0$ and $(P,\tau)$ is a principal system of log Hodge bundles on $(\mathscr{X},\mathscr{D})$, then $(\mathcal{E}:=P\times_KV,\theta:=\mathrm{d}\rho(\tau))$ is a system of log Hodge bundles. A polarization $h_V$ for $V$ together with a metric $P_H$ for $P|_{\mathscr{X}-\mathscr{D}}$ give a metric $h_{\mathcal{E}}$ for the system of Hodge bundles $(\mathcal{E}|_{\mathscr{X}-\mathscr{D}},\theta|_{\mathscr{X}-\mathscr{D}})$ on $\mathscr{X}-\mathscr{D}$. 

If $(P|_{\mathscr{X}-\mathscr{D}},\tau|_{\mathscr{X}-\mathscr{D}},P_H)$ is a principal variation of Hodge structures on $\mathscr{X}-\mathscr{D}$, then $(\mathcal{E}|_{\mathscr{X}-\mathscr{D}},\theta|_{\mathscr{X}-\mathscr{D}},h_{\mathcal{E}})$ is a complex variation of Hodge structure.
\end{lemma}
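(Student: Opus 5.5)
The plan is to reduce everything to the corresponding statement for varieties, \cite[Lemma 3.10]{dengcadorel}, by working chart by chart: either on an étale atlas of $\mathscr{X}$ or on the levels $Z_k$ of a good covering from Lemma \ref{Hypercover-lem}. The constructions involved are the associated bundle, the induced Higgs field and the induced metric. All of them are functorial and compatible with pullback along étale maps, so they descend.

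First I will build the system of log Hodge bundles.

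1. Since $\rho$ is a Hodge representation, $V$ has a decomposition $V=\bigoplus_{p,q}V^{p,q}$ that is preserved by $K$. Here $K$ is the complexification of $K_0$, and its Lie algebra is contained in $\mathfrak{g}^{0,0}$.
2. The Lie algebra action satisfies $\mathrm{d}\rho(\mathfrak{g}^{r,-r})V^{p,q}\subset V^{p+r,q-r}$.
3. Because $K$ preserves the $V^{p,q}$, the associated bundle splits as $\mathcal{E}=P\times_KV=\bigoplus_{p,q}P\times_KV^{p,q}=:\bigoplus\mathcal{E}^{p,q}$.
4. The map $\mathrm{d}\rho:\mathfrak{g}\to\mathrm{End}(V)$ is $K$-equivariant, so it induces a bundle map $P\times_K\mathfrak{g}^{-1,1}\to\bigoplus_{p,q}\mathcal{H}om(\mathcal{E}^{p,q},\mathcal{E}^{p-1,q+1})$.
5. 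Composing with $\tau$ and dualizing the $\mathcal{T}_{\mathscr{X}}(-\log\mathscr{D})$ slot gives $\theta:\mathcal{E}\to\mathcal{E}\otimes\Omega^1_{\mathscr{X}}(\log\mathscr{D})$. With the paper's indexing convention one may need the conjugate or dual grading to get the shift $(p+1,q-1)$; I will fix conventions as in Deng--Cadorel.
6. Integrability follows from $\theta\wedge\theta(u,v)=[\mathrm{d}\rho\tau(u),\mathrm{d}\rho\tau(v)]=\mathrm{d}\rho[\tau(u),\tau(v)]=0$.

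These are pointwise algebraic identities. Hence they can be checked on any étale chart, and they are compatible with the descent isomorphisms.

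Next I will construct the metric. A metric $P_H$ is a $K_0$-reduction over $\mathscr{X}-\mathscr{D}$. Since $K_0$ is compact and the polarization $h_V$ is $K_0$-invariant (this is part of the definition of a Hodge representation), the formula $h_{\mathcal{E}}((p,v),(p,w))=h_V(v,w)$ for $p\in P_H$ is well defined. The summands $V^{p,q}$ are $h_V$-orthogonal, so $h_{\mathcal{E}}$ is a direct sum of metrics on the $\mathcal{E}^{p,q}$. It is again defined chartwise and glues.

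Finally, for the flatness statement, I will compare connections. The Chern connection of $h_{\mathcal{E}}$ plus $\theta+\bar\theta_{h_{\mathcal{E}}}$ should equal the connection induced by $D_H$ on $(P_H\times_{K_0}G_0)\times_{G_0}V$ via $\rho$. This is the content of Simpson's and Deng--Cadorel's computation: the Chern connection comes from the $\mathfrak{k}_0$-part of $D_H$, $\theta$ from the $\mathfrak{g}^{-1,1}$-part, and $\bar\theta$ from its conjugate in $\mathfrak{g}^{1,-1}$. The conjugation relation $\overline{\mathfrak{g}^{p,-p}}=\mathfrak{g}^{-p,p}$, together with the sign convention $(-1)^{p}$ in the polarization, identifies the $h_{\mathcal{E}}$-adjoint of $\mathrm{d}\rho\tau$ with $\mathrm{d}\rho$ of the conjugate of $\tau$. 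Flatness of $D_H$ then gives flatness of the induced connection, since $\rho$ is a homomorphism, and curvature commutes with associated-bundle functors.

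I expect this adjoint identification to be the main obstacle, but it is purely local. I would simply invoke \cite[Lemma 3.10]{dengcadorel} on each étale chart $U\to\mathscr{X}-\mathscr{D}$ and note that flatness is an étale-local condition. Therefore $(\mathcal{E}|_{\mathscr{X}-\mathscr{D}},\theta|_{\mathscr{X}-\mathscr{D}},h_{\mathcal{E}})$ is a complex variation of Hodge structure on the stack.
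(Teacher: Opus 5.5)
Your proposal is correct and follows the same route as the paper: it applies \cite[Lemma 3.10]{dengcadorel} on a compatible family of \'etale charts $U_k\to\mathscr{X}$, then glues the resulting systems of log Hodge bundles, metrics and variations of Hodge structure, using that the constructions $P\mapsto P\times_KV$, $\tau\mapsto\mathrm{d}\rho(\tau)$ and $P_H\mapsto h_{\mathcal{E}}$ commute with \'etale pullback. Your extra unpacking of that lemma's local content (the $K$-stable splitting of $V$, integrability via $\mathrm{d}\rho[\tau(u),\tau(v)]=0$, $K_0$-invariance of $h_V$, and the comparison of $D_H$ with the induced flat connection) is consistent but not needed, and you were right to work on \'etale charts rather than on the levels $Z_k$, which are only surjective where \'etale.
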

\begin{proof}
The strategy is to first apply \cite[Lemma 3.10]{dengcadorel} to local \'etale charts of $\mathscr{X}$ and then glue.

Let $\{g_k:U_k\to \mathscr{X}\}$ be a collection of compatible \'etale charts for $(\mathscr{X},\mathscr{D})$. Then each $(U_k,g_k^*\mathscr{D})$ is a quasi-projective log pair and $(g_k^*P,g_k^*\tau)$ is a principal system of log Hodge bundles on $(U_k,g_k^*\mathscr{D})$. Applying \cite[Lemma 3.10]{dengcadorel} to each $(g_k^*P,g_k^*\tau)$, it follows that each $(\mathcal{E}_k:=g_k^*P\times_KV,\theta_k:=\mathrm{d}\rho(g_k^*\tau))$ is a system of log Hodge bundles on $(U_k,g_k^*\mathscr{D})$. Note that the collection $\{(\mathcal{E}_k,\theta_k)\}$ satisfies compatibility because $\{(g_k^*P,g_k^*\tau)\}$ does. Hence, they glue to give a system of log Hodge bundles $(\mathcal{E}=P\times_KV,\theta=\mathrm{d}\rho(\tau))$ on $(\mathscr{X},\mathscr{D})$.

Let $h_V$ be a polarization for $V$ and let $P_H$ be a metric for $P|_{\mathscr{X}-\mathscr{D}}$. This is the same as a collection of metrics $P_{H_k}$ for $g_k^*P|_{U_k-g_k^*\mathscr{D}}$) that satisfy compatibility. By \cite[Lemma 3.10]{dengcadorel} again, we obtain a collection of metrics $h_{\mathcal{E}_k}$ for $\mathcal{E}_k|_{U_k-g_k^*\mathscr{D}}$. Note that the collection $\{h_{\mathcal{E}_k}\}$ satisfies compatibility because $\{P_{H_k}\}$ does. Hence, we can glue to obtain a metric $h_{\mathcal{E}}$ for the system of Hodge bundles $(\mathcal{E}|_{\mathscr{X}-\mathscr{D}},\theta|_{\mathscr{X}-\mathscr{D}})$ on $\mathscr{X}-\mathscr{D}$.

Let $(P|_{\mathscr{X}-\mathscr{D}},\tau|_{\mathscr{X}-\mathscr{D}},P_H)$ be a principal variation of Hodge structure on $\mathscr{X}-\mathscr{D}$. This is the same as a collection of principal variations of Hodge structure $(g_k^*P|_{U_k-g_k^*\mathscr{D}},g_k^*\tau|_{U_k-g_k^*\mathscr{D}},P_{H,k})$ on $U_k-g_k^*\mathscr{D}$ that satisfies compatibility. Then it follows from \cite[Lemma 3.10]{dengcadorel} that each triple $(\mathcal{E}_k|_{U_k-g_k^*\mathscr{D}},\theta_k|_{U_k-g_k^*\mathscr{D}},h_{\mathcal{E}_k})$ is a complex variation of Hodge structure on $U_k-g_k^*\mathscr{D}$ associated to the representation $\rho$ and the collection of these satisfies compatibility. Thus, we conclude that $(\mathcal{E}|_{\mathscr{X}-\mathscr{D}},\theta|_{\mathscr{X}-\mathscr{D}},h_{\mathcal{E}})$ is a complex variation of Hodge structure on $\mathscr{X}-\mathscr{D}$.
\end{proof}

We also have the following converse, which says that to system of log Hodge bundles we can associate a principal one. This is an analog of \cite[Proposition 3.11]{dengcadorel} adapted to our setting.

\begin{proposition}\label{Sys-of-Hodge-bundles-lem}
Let $(\mathscr{X},\mathscr{D})$ be a log pair and let $(\mathcal{E}=\bigoplus_{p+q=w}\mathcal{E}^{p,q},\theta)$ be a system of log Hodge bundles on $(\mathscr{X},\mathscr{D})$. Then there is a principal system of log Hodge bundles $(P,\tau)$ with structure group $K:=\mathrm{P}(\prod_p\gl(r_p,\mathbb{C}))$ associated to $(\mathcal{E},\theta)$, where $r_p:=\mathrm{rank}(\mathcal{E}^{p,q})$. Moreover, any Hermitian metric $h=\bigoplus_{p+q=w}h_p$ for $\mathcal{E}|_{\mathscr{X}-\mathscr{D}}$ gives rise to a metric reduction $P_H$ for $P|_{\mathscr{X}-\mathscr{D}}$ with the structure group $K_0:=\mathrm{P}(\prod_pU(r_p))\subset K$. 
\end{proposition}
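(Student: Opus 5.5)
The plan mirrors the proof of Lemma \ref{VHS-lemma}: apply \cite[Proposition 3.11]{dengcadorel} on \'etale charts of $\mathscr{X}$ and then glue, using that the construction is canonical and therefore commutes with pullback.

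\textbf{Step 1: charts.} Choose a collection of compatible \'etale charts $\{g_k:U_k\to\mathscr{X}\}$. Each $(U_k,g_k^*\mathscr{D})$ is then a quasi-projective log pair. Each pullback $(g_k^*\mathcal{E}=\bigoplus g_k^*\mathcal{E}^{p,q},g_k^*\theta)$ is a system of log Hodge bundles, with the same ranks $r_p$ (ranks are constant on the connected components; otherwise work componentwise). Applying \cite[Proposition 3.11]{dengcadorel} to each chart produces principal systems $(P_k,\tau_k)$ with structure group $K=\mathrm{P}(\prod_p\gl(r_p,\mathbb{C}))$.

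Concretely, $P_k$ is the projectivization of the frame bundle of $\bigoplus_p g_k^*\mathcal{E}^{p,q}$, taken with respect to the block decomposition. The Lie algebra $\mathfrak{g}$ is that of $\mathrm{PU}(r_{p_1},\dots)$-type, graded by $p$-shift. The map $\tau_k$ is the log-tangent contraction $v\mapsto g_k^*\theta(v)$, viewed as a section of $P_k\times_K\mathfrak{g}^{-1,1}$. Here $\mathfrak{g}^{-1,1}$ is the block-shifting part, with the sign convention fixed so that $\theta(\mathcal{E}^{p,q})\subset\mathcal{E}^{p+1,q-1}\otimes\Omega^1$. The vanishing $[\tau(u),\tau(v)]=0$ follows from $\theta\wedge\theta=0$.

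\textbf{Step 2: gluing.} The key point is that the frame bundle, its block structure and the contraction map are all functorial under pullback along the \'etale maps $U_k\times_{\mathscr{X}}U_l\to U_k,U_l$. Hence the canonical isomorphisms $\mathrm{pr}_1^*g_k^*\mathcal{E}\cong\mathrm{pr}_2^*g_l^*\mathcal{E}$ induce isomorphisms $\mathrm{pr}_1^*(P_k,\tau_k)\cong\mathrm{pr}_2^*(P_l,\tau_l)$. These satisfy the cocycle condition on triple overlaps, because the ones for $\mathcal{E}$ do. Descent for principal bundles and for morphisms of vector bundles along \'etale covers of DM-stacks then yields $(P,\tau)$ on $(\mathscr{X},\mathscr{D})$.

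\textbf{Step 3: metrics.} A metric $h=\bigoplus h_p$ on $\mathcal{E}|_{\mathscr{X}-\mathscr{D}}$ pulls back to block-diagonal metrics on each $U_k-g_k^*\mathscr{D}$. The bundle of block-unitary frames, projectivized, is a $K_0$-reduction $P_{H,k}\subset P_k$. These reductions are again compatible on overlaps, since $h$ is a metric on the stack, so they glue to $P_H$.

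The main obstacle I expect is making sure the descent is genuinely canonical. Passing to the projective group $K$ quotients out scalars, so the gluing data on $P_k$ must come from the actual isomorphisms of $\mathcal{E}$, not merely from isomorphisms up to scalars. One must also check that the Hodge group structure on $\mathfrak{g}$ (from \cite{dengcadorel}) is independent of the chart. Since it depends only on the ranks $r_p$, it is, and this is where I would place the care.
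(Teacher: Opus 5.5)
Your proposal is correct and uses the same construction as the paper: reduce the frame bundle of $\mathcal{E}$ to $\prod_p\gl(r_p,\mathbb{C})$ via the decomposition $\mathcal{E}=\bigoplus\mathcal{E}^{p,q}$, extend the structure group to $K$, take $\tau:=\theta$, and obtain $P_H$ from block-unitary frames. The paper phrases the last step via unitary frames for $Q(u,v)=(\sqrt{-1})^{p-q}h(u,v)$, which amounts to the same $\prod_p\mathrm{U}(r_p)$-reduction. The only difference is that the paper carries out these constructions directly on $\mathscr{X}$, since frame bundles, reductions and extensions of structure group of a vector bundle on a DM-stack need no charts; your chart-wise application of \cite[Proposition 3.11]{dengcadorel} followed by descent, together with the canonicity concern about scalars, is sound but unnecessary.
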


\begin{proof}
The decomposition $\mathcal{E}=\bigoplus_{p+q=w}\mathcal{E}^{p,q}$ implies that the frame bundle of $\mathcal{E}$ admits a reduction in structure group $P'$ from $\gl(r,\mathbb{C})$ to $\prod_p\gl(r_p,\mathbb{C})$, where $r$ and $r_p$ denote the ranks of $\mathcal{E}$ and $\mathcal{E}^{p,q}$, respectively. We let $K=\mathrm{P}(\prod_p\gl(r_p,\mathbb{C}))$ be the image of $\prod_p\gl(r_p,\mathbb{C})$ via the natural morphism $\gl(r,\mathbb{C})\to\mathrm{P}\gl(r,\mathbb{C})$.  

Let $P$ be the principal $K$-bundle obtained by extending the structure group of $P'$ from $\prod_p\gl(r_p,\mathbb{C})$ to $K$, and set $\tau:=\theta$. Then $(P,\tau)$ is the desired principal system of log Hodge bundles associated to $(\mathcal{E},\theta)$.

Following the proof of \cite[Proposition 3.11]{dengcadorel}, let $Q$ be the sesquilinear form 
\[
Q(u,v)=(\sqrt{-1})^{p-q}h(u,v), 
\]
for $u,v\in\mathcal{E}^{p,q}$. Let $P'_H$ be the reduction in structure group of the frame bundle of $\mathcal{E}|_{\mathscr{X}-\mathscr{D}}$ consisting of unitary frames with respect to $Q$. Then the structure group of $P'_H$ is $\prod_{p+q=w}\mathrm{U}(r_p)$. 

Let $P_H$ be the extension of structure group of $P'_H$ to $K_0=\mathrm{P}(\prod_pU(r_p))$ obtained via the morphism $\gl(r,\mathbb{C})\to\mathrm{P}\gl(r,\mathbb{C})$. Then $P_H\subset P|_{\mathscr{X}-\mathscr{D}}$ is the desired metric reduction. 
\end{proof}

We have the following version of \cite[Theorem 4.1]{dengcadorel} in our setting, which says that a principal variation of Hodge structure can be constructed from a polystable system of log Hodge bundles with vanishing Chern classes.

\begin{proposition}\label{Principal-VHS-prop}
Let $(\mathscr{X},\mathscr{D})$ be a proper log pair, let $f:Z_\bullet\to\mathscr{X}$ be a good covering for $(\mathscr{X},\mathscr{D})$, and let $\mathcal{L}$ be an ample line bundle on $Z_\bullet$. Let $(P,\tau)$ be a principal system of log Hodge bundles on $(\mathscr{X},\mathscr{D})$ and let $\rho:G\to\gl(V)$ be a Hodge representation for some polarized Hodge structure $(V,h_V)$ so that $\rho|_{K_0}:K_0\to \gl(V)$ is faithful and $\mathrm{d}\rho:\mathfrak{g}\to\mathfrak{gl}(V)$ is injective. 

If the system of log Hodge bundles $(\mathcal{E}:=P\times_KV,\theta=\mathrm{d}\rho(\tau))$ is polystable with respect to $\mathcal{L}$ and satisfies $ch_2(\mathcal{E})\cdot c_1(\mathcal{L})^{d-2}=0$, then there is a metric reduction $P_H$ for $P|_{\mathscr{X}-\mathscr{D}}$ so that the triple $(P|_{\mathscr{X}-\mathscr{D}},\tau|_{\mathscr{X}-\mathscr{D}},P_H)$ is a principal variation of Hodge structures on $\mathscr{X}-\mathscr{D}$. Moreover, such a $P_H$ together with the polarization $h_V$ for $V$ gives rise to a Hodge metric $h$ for $(\mathcal{E}|_{\mathscr{X}-\mathscr{D}},\theta|_{\mathscr{X}-\mathscr{D}})$ which is adapted to $(\mathcal{E},\theta)$.
\end{proposition}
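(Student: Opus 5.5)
The strategy follows \cite[Theorem 4.1]{dengcadorel}: first produce a Hodge metric on $\mathcal{E}$, then recover from it a reduction of $P$ to $K_0$.

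\textbf{Step 1: a Hodge metric on $\mathcal{E}$.} By Lemma \ref{VHS-lemma}, $(\mathcal{E},\theta)=(P\times_KV,\mathrm{d}\rho(\tau))$ is a system of log Hodge bundles on $(\mathscr{X},\mathscr{D})$. To apply Proposition \ref{Hodge-metric} we also need $c_1(f^*\mathcal{E})\cdot c_1(\mathcal{L})^{d-1}=0$. Since $G_0$ is semisimple, $\rho$ lands in $\mathrm{SL}(V)$ on the identity component, so $\det\mathcal{E}$ is torsion on each chart and hence $c_1(\mathcal{E})=0$ rationally. Proposition \ref{Hodge-metric} then gives a decomposition $(\mathcal{E},\theta)=\bigoplus_i(\mathcal{E}_i,\theta_i)$ into stable systems of log Hodge bundles. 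Each summand carries an adapted Hodge metric $h_i$, unique up to scaling. Set $h:=\bigoplus_ih_i$; this is a Hodge metric on $(\mathcal{E}|_{\mathscr{X}-\mathscr{D}},\theta|_{\mathscr{X}-\mathscr{D}})$ adapted to $(\mathcal{E},\theta)$.

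\textbf{Step 2: reduce $P$ to $K_0$.} The reduction should be
\[
P_H:=\{p\in P|_{\mathscr{X}-\mathscr{D}} : \text{the induced isomorphism } V\to\mathcal{E}_x \text{ is an isometry from } h_V \text{ to } h\}.
\]
Showing that $P_H$ is a $K_0$-reduction is the main obstacle. The key issue is that $h$ is determined only up to scalars on each stable summand, and we must choose the scalars so that $h$ locally "comes from" $K_0$. To handle this, I will work on étale charts $U_k\to\mathscr{X}$ and run the argument of \cite[Theorem 4.1]{dengcadorel} there. That argument uses faithfulness of $\rho|_{K_0}$, injectivity of $\mathrm{d}\rho$, and the uniqueness of harmonic metrics on stable summands. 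Together these show that the $K$-orbit structure of the frames of $\mathcal{E}$ is preserved by the unitary frames of $h$, possibly after rescaling summands. Because the pulled-back metrics are the unique adapted Hodge metrics up to the same scaling, the chart-level reductions are compatible. They therefore glue via descent along the étale charts, exactly as in Lemma \ref{VHS-lemma} and Claim \ref{claim-descent}.

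\textbf{Step 3: flatness.} Flatness of $D_H$ on $P_H\times_{K_0}G_0$ is a local statement, so it suffices to check it on charts. There the connection induced on $\mathcal{E}=P\times_KV$ is $D_h=d_h+\theta+\bar\theta_h$, which is flat because $h$ is harmonic. Since $\mathrm{d}\rho$ is injective, the curvature of $D_H$ maps injectively into the curvature of $D_h$, and therefore vanishes. By construction $P_H$ together with $h_V$ recovers $h$, as in Lemma \ref{VHS-lemma}. This gives the final assertion that $h$ is a Hodge metric adapted to $(\mathcal{E},\theta)$.
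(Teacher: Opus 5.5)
\textbf{Verdict: there is a genuine gap, in your Step 2.} Your Steps 1 and 3 match the paper. Step 1 kills $c_1(f^*\mathcal{E})$ via the determinant and invokes Proposition \ref{Hodge-metric} to get an adapted Hodge metric $h$. Step 3 derives flatness of $D_H$ from flatness of $D_h$ and injectivity of $\mathrm{d}\rho$, as in the last part of \cite[Theorem 4.1]{dengcadorel}.

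Step 2 is the real content of the proposition, and you assert it rather than prove it. The ingredients you list (faithfulness of $\rho|_{K_0}$, injectivity of $\mathrm{d}\rho$, uniqueness up to scaling) do not by themselves force the $h$-unitary frames to meet every fibre of $P$ in exactly one $K_0$-orbit. The metric $h$ is determined only up to independent rescalings of the stable summands, and only special choices are compatible with $K$. For instance, take $K$ the diagonal torus of $\mathrm{SL}_2$, $\mathcal{E}=\mathcal{O}\oplus\mathcal{O}$ and $\theta=0$: the Hodge metric $\mathrm{diag}(a,b)$ admits unitary frames in $P$ only if $ab=1$. ``Possibly after rescaling summands'' is precisely what has to be established, and you give no mechanism that fixes the scalars.

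Moreover, you run the argument in the wrong place. The \'etale charts $U_k$ are not proper, so the tools behind \cite[Theorem 4.1]{dengcadorel} are not available there. These are uniqueness of adapted harmonic metrics and the passage from harmonic metrics back to polystability (Theorem \ref{correspondence}, \cite[Theorem 9.4, Propositions 5.1--5.3]{mochizuki_asterisque}). Conversely, the gluing you worry about is a non-issue, since $h$ and $P$ are already global on $\mathscr{X}-\mathscr{D}$. What is missing is the pointwise statement that your $P_H$ is a $K_0$-torsor.

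\textbf{What the paper does instead.} It works on the projective levels $Z_k$ of the good covering, which is exactly what they are for.
\begin{enumerate}
\item By a Chevalley--Tannakian argument, $K$ is the stabilizer of a line $V_1\subset T^{a,b}V$ with a $K$-invariant complement $V_2$.
\item The metric induced by $f_k^*h$ on $T^{a,b}f_k^*\mathcal{E}$ is harmonic and adapted. Theorem \ref{correspondence} and Proposition \ref{Hodge-metric} on $Z_k$ then make it split orthogonally along the induced decomposition.
\item Set $P_{H,k}:=P_{\mathrm{U}(f_k^*\mathcal{E},f_k^*h)}\cap f_k^*P|_{V_k}$; it has structure group $\mathrm{U}(V,h_V)\cap K=K_0$. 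Concretely, let $p\in P_x$ with $p^*h=h_V(A\,\cdot,\cdot)$. Orthogonality of $V_1\oplus V_2$ for the metrics induced by $h_V$ and by $p^*h$ forces the action of $A$ on $T^{a,b}V$ to preserve $V_1$. Hence $A\in K$, and $p\circ A^{-1/2}$ is an $h$-unitary frame in $P_x$.
\item These reductions are compatible, and they descend because $f$ is surjective where \'etale.
\end{enumerate}

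\textbf{A caution if you write this out yourself.} Polystability passes only to summands of a $\theta$-invariant splitting. When $\mathfrak{g}^{-1,1}\neq0$, a line whose stabilizer is exactly $K$ cannot be $\mathrm{d}\rho(\mathfrak{g}^{-1,1})$-stable. So characterize $G$ rather than $K$ by a tensor invariant, which gives a reduction to the compact form of $G$. Then use that $h$ is a Hodge metric, so the Hodge grading is $h$-self-adjoint, to cut down to $K_0$.
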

\begin{proof}
Following the proof of \cite[Theorem 4.1]{dengcadorel}, we first show that $(\mathcal{E}|_{\mathscr{X}-\mathscr{D}},\theta|_{\mathscr{X}-\mathscr{D}})$ admits a Hodge metric that is adapted to $(\mathcal{E},\mathcal{\theta})$. This amounts to showing that $(f^*\mathcal{E}|_{V_\bullet},f^*\theta|_{V_\bullet})$ admits a Hodge metric that is adapted to $(f^*\mathcal{E},f^*\theta)$ and descends to a Hodge metric on $(\mathcal{E}|_{\mathscr{X}-\mathscr{D}},\theta|_{\mathscr{X}-\mathscr{D}})$, where $V_\bullet:=Z_\bullet-f^*\mathscr{D}$ as before.

We recall from the proof of \cite[Theorem 4.1]{dengcadorel} that $K$ being a complex semi-simple Lie group implies that the representation $\rho':K\to\gl(\det V)$ induced by $\rho$ has image contained in $\mathrm{SL}(\det V)=\{1\}$. Hence $\rho'$ is trivial and the line bundle $\det(f^*\mathcal{E})=f^*P\times_K\det V$ is the trivial line bundle on $Z_\bullet$. Therefore, we have $c_1(f^*\mathcal{E})=0$. Together with the assumption that $(f^*\mathcal{E},f^*\theta)$ is slope-polystable with respect to $\mathcal{L}$ and satisfies $ch_2(\mathcal{E})\cdot c_1(\mathcal{L})^{d-2}=0$, Proposition \ref{Hodge-metric} says that $(\mathcal{E}|_{\mathscr{X}-\mathscr{D}},\theta|_{\mathscr{X}-\mathscr{D}})$ admits a Hodge metric that is adapted to $(\mathcal{E},\mathcal{\theta})$.

Recall that the levels of $Z_\bullet$ are smooth projective varieties and $(Z_k,f_k^*\mathscr{D})$ are projective log pairs. Consider the principal systems of log Hodge bundles $(f_k^*P,f_k^*\tau)$ on $(Z_k,f_k^*\mathscr{D})$ and let $T^{a,b}V:=\mathrm{Hom}(V^{\otimes a},V^{\otimes b})$. Then the representation $\rho$ induces a representation $\rho^{a,b}:G\to\gl(T^{a,b}V)$ for any $a,b\in\mathbb{N}$. 

It follows from the Tannakian arguments in the proof of \cite[Theorem 4.1]{dengcadorel} that there is a one-dimensional subspace $V_1\in T^{a,b}V$ such that 
\[
K=\{g\in\gl(V)\;|\; \rho^{a,b}(g)(V_1)=V_1\}
\]
for some $a,b\in\mathbb{N}$. The complement $V_2$ of $V_1$ in $T^{a,b}V$ is also $K$-invariant. Lemma \ref{VHS-lemma} implies that for each $k$, the Hodge representation $\rho^{a,b}$ together with $(f_k^*P,f_k^*\tau)$ give rise to a system of log Hodge bundles
\[
T^{a,b}(f_k^*\mathcal{E},f_k^*\theta)=(f_k^*P\times_KT^{a,b}V,\theta_k^{a,b}:=\mathrm{d}\rho^{a,b}(f_k^*\tau)).
\]
Setting $(\mathcal{E}_{i,k},\theta_{i,k}):=(f_k^*P\times_KV_i,\mathrm{d}\rho^{a,b}(f_k^*\tau))$ for $i\in\{1,2\}$, we have 
\[
T^{a,b}(f_k^*\mathcal{E},f_k^*\theta)\cong(\mathcal{E}_{1,k},\theta_{1,k})\oplus(\mathcal{E}_{2,k},\theta_{2,k})
\]
for each $k$. Theorem \ref{correspondence} implies that $T^{a,b}(f_k^*\mathcal{E},f_k^*\theta)$ is slope-polystable with respect to $\mathcal{L}_k$ and satisfies $c_1(T^{a,b}f_k^*\mathcal{E})\cdot c_1(\mathcal{L}_k)^{d-1}=0$, from which it follows that $(\mathcal{E}_{i,k},\theta_{i,k})$ is polystable with respect to $\mathcal{L}_k$, for $i\in\{1,2\}$.

Proposition \ref{Hodge-metric} then implies that each $(\mathcal{E}_{i,k}|_{V_k},\theta_{i,k}|_{V_k})$ admits a Hodge metric $h_{i,k}$ that is adapted to $(\mathcal{E}_{i,k},\theta_{i,k})$ and $f_k^*h$ coincides with $h_{1,k}\oplus h_{2,k}$ up to an obvious ambiguity. We view the principal $K$-bundle $f_k^*P|_{V_k}$ as a reduction of structure group of the frame bundle of $f_k^*\mathcal{E}|_{V_k}$ from $\gl(V)$ to $K$. The metric $f_k^*h$ for $(f_k^*\mathcal{E}|_{V_k},\theta|_{V_k})$ gives rise to a metric reduction $P_{\mathrm{U}(f_k^*\mathcal{E},f_k^*h)}$ of the frame bundle, and the structure group of $P_{\mathrm{U}(f_k^*\mathcal{E},f_k^*h)}$ is $\mathrm{U}(V,h_V)$. Note that the collection of bundles $\{P_{\mathrm{U}(f_k^*\mathcal{E},f_k^*h)}\}$ satisfies compatibility because the collection of metrics $\{f_k^*h\}$ does.

For each $k$, define the principal bundle $P_{H,k}:=P_{\mathrm{U}(f_k^*\mathcal{E},f_k^*h)}\cap f_k^*P|_{V_k}$ which has structure group $\mathrm{U}(V,h_V)\cap K=K_0$. Thus, each $P_{H,k}\subset P|_{V_k}$ is a metric reduction with structure group $K_0$. The collection $\{P_{H,k}\}$ satisfies compatibility because $\{P_{\mathrm{U}(f_k^*\mathcal{E},f_k^*h)}\}$ and $\{f_k^*P|_{V_k}\}$ do, and we obtain a metric reduction $P_H\subset P|_{\mathscr{X}-\mathscr{D}}$ as claimed. Moreover, $P_H$ corresponds to the metric $h$ on $(\mathcal{E}|_{\mathscr{X}-\mathscr{D}},\theta|_{\mathscr{X}-\mathscr{D}})$.

The arguments in the last part of the proof of \cite[Theorem 4.1]{dengcadorel} show that each triple $(f_k^*P|_{V_k},f_k^*\tau|_{V_k},P_{H_k})$ is a principal variation of Hodge structure on $V_k$. The collection $(f_k^*P|_{V_k},f_k^*\tau|_{V_k},P_{H_k})$ satisfies compatibility, since $\{P_{H,k}\}$ does, and we obtain a principal variation of Hodge structure $(f^*P|_{V_\bullet},f^*\tau|_{V_\bullet},P_{H\bullet})$ on $V_\bullet$. The surjective where \'etale property of $f|_{V_\bullet}:V_\bullet\to\mathscr{X}-\mathscr{D}$ implies that this descends to a principal variation of Hodge structure $(P|_{\mathscr{X}-\mathscr{D}},\tau|_{\mathscr{X}-\mathscr{D}},P_H)$ on $\mathscr{X}-\mathscr{D}$. This finishes the proof.
\end{proof}

\section{Uniformization}

In this section, we prove Theorems \ref{Uniformization-thm} and \ref{Converse-thm}. We begin by observing that a uniformizing system of log Hodge bundles on $\mathscr{X}$ for a Hodge group $G_0$ of Hermitian type that restricts to a uniformizing variation of Hodge structure on $\mathscr{X}-\mathscr{D}$ induces an \'etale morphism $\widetilde{\mathscr{X}-\mathscr{D}}\to G_0/K_0$ of analytic stacks, where $\widetilde{\mathscr{X}-\mathscr{D}}$ denotes the universal covering stack of $\mathscr{X}-\mathscr{D}$ and $K_0$ is the maximal compact subgroup of $G_0$. The quotient $G_0/K_0$ is a bounded symmetric domain $\mathcal{D}$.  

\begin{lemma}\label{Completeness-lem}
Let $(\mathscr{X},\mathscr{D})$ be a proper log pair and let $(P,\tau)$ be a uniformizing system of log Hodge bundles on $(\mathscr{X},\mathscr{D})$ for a Hodge group $G_0$ of Hermitian type, as in Definition \ref{Uniformizng VHS}. Suppose that there is a metric reduction $P_H\subset P|_{\mathscr{X}-\mathscr{D}}$ such that $(P|_{\mathscr{X}-\mathscr{D}},\tau|_{\mathscr{X}-\mathscr{D}},P_H)$ is a uniformizing variation of Hodge structure on $\mathscr{X}-\mathscr{D}$.

If the metric induced by $P_H$ on $\mathcal{T}_{\mathscr{X}-\mathscr{D}}$ is complete, then there is an isomorphism $\widetilde{\mathscr{X}-\mathscr{D}}\xrightarrow{\sim}G_0/K_0=:\mathcal{D}$.
\end{lemma}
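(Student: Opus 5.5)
The plan is to build a period map on the universal covering stack and show that it is a local isometry between complete Riemannian manifolds, so that it is a covering of the simply connected domain $\mathcal{D}$ and therefore an isomorphism. I would follow \cite[Theorem 3.6 / Proposition 3.9]{dengcadorel} (the analogous statement for varieties), working on $\widetilde{\mathscr{X}-\mathscr{D}}$ in place of the universal cover of a quasi-projective variety.

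First, pass to the universal covering stack $\pi:\widetilde{\mathscr{X}-\mathscr{D}}\to\mathscr{X}-\mathscr{D}$ and pull back the flat $G_0$-bundle $P_H\times_{K_0}G_0$ with its flat connection $D_H$. On the universal cover the flat bundle is trivialized: choose a flat trivialization $\pi^*(P_H\times_{K_0}G_0)\cong\widetilde{\mathscr{X}-\mathscr{D}}\times G_0$. The $K_0$-reduction $\pi^*P_H$ then becomes a section of $\widetilde{\mathscr{X}-\mathscr{D}}\times G_0/K_0$, that is, a map
\[
\Phi:\widetilde{\mathscr{X}-\mathscr{D}}\to G_0/K_0=\mathcal{D}.
\]
This map is equivariant for the monodromy representation $\pi_1(\mathscr{X}-\mathscr{D})\to G_0$. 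Because the construction is canonical, it can be done on étale charts and glued, or equivalently carried out on an atlas and checked to be compatible, exactly as in Lemma \ref{VHS-lemma}.

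Second, use the standard computation from \cite{dengcadorel} (ultimately Simpson's) to identify the differential of the period map. The $(1,0)$-part of $d\Phi$ is identified with $\tau$, viewed as a map $\mathcal{T}\to P\times_K\mathfrak{g}^{-1,1}\cong\Phi^*T^{1,0}\mathcal{D}$, and the antiholomorphic part vanishes by Griffiths transversality. Hence $\Phi$ is holomorphic. Since $\tau$ is an isomorphism on $\mathscr{X}-\mathscr{D}$ (the uniformizing hypothesis of Definition \ref{Uniformizng VHS}), $\Phi$ is a local biholomorphism, i.e.\ étale. By construction, the metric induced by $P_H$ on $\mathcal{T}_{\mathscr{X}-\mathscr{D}}$, namely $h_H$ restricted via $\tau$, is exactly $\Phi^*$ of the $G_0$-invariant metric $h_{\mathfrak g}$ on $\mathcal{D}$. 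So $\Phi$ is a local isometry.

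Third, conclude. The universal covering stack of the smooth DM-stack $\mathscr{X}-\mathscr{D}$ carries a pulled-back metric, and since $\Phi$ is étale into a manifold, the covering stack has trivial stabilizers, so it is a complex manifold. Completeness of the metric on $\mathcal{T}_{\mathscr{X}-\mathscr{D}}$ lifts to completeness on $\widetilde{\mathscr{X}-\mathscr{D}}$. A local isometry from a complete connected Riemannian manifold to a connected Riemannian manifold is a covering map. Because $\mathcal{D}$ is contractible, hence simply connected, $\Phi$ is an isomorphism.

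I expect the main obstacle to be the stacky part of the third step. One must make sense of the universal covering stack as an analytic stack, and then show that the étale map $\Phi$ forces it to be representable by a manifold, i.e.\ that the generic and inertia stabilizers act trivially. Inertia acts on tangent spaces through $\tau$ and trivially on $\mathcal{D}$ only if it lies in the kernel of the monodromy representation, and ineffective generic stabilizers cause trouble here. I would handle this by comparing with the coarse/rigidified space and arguing that $\Phi$ being étale rules out nontrivial stabilizers acting on the universal cover. Failing that, I would interpret the isomorphism as being of analytic stacks, via the descent along the hypercovering $V_\bullet$ used earlier.
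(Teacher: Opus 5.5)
Your proposal is correct and follows essentially the same route as the paper's proof. You trivialize the pulled-back flat $G_0$-bundle on $\widetilde{\mathscr{X}-\mathscr{D}}$ so that $P_H$ becomes a period map $\Phi$ to $\mathcal{D}=G_0/K_0$, use that $\tau$ is an isomorphism to get $\mathcal{T}_{\widetilde{\mathscr{X}-\mathscr{D}}}\cong\Phi^*\mathcal{T}_{\mathcal{D}}$ and $\widetilde{h}_H=\Phi^*h_{\mathcal{D}}$ (so $\Phi$ is an \'etale local isometry), and conclude by completeness and simple connectivity, exactly as the paper does. The paper handles the stabilizer issue you flag only by asserting that being \'etale over the analytic space $\mathcal{D}$ makes $\widetilde{\mathscr{X}-\mathscr{D}}$ analytic; your rigidification idea closes it properly, because equivariance of the invertible $d\Phi$ forces every stabilizer to act trivially on tangent spaces (leaving at most a gerbe over a rigidification isomorphic to $\mathcal{D}$), and a gerbe with finite band over the contractible $\mathcal{D}$ has simply connected total space only if the band is trivial.
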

\begin{proof}
Let $(\widetilde{P},\widetilde{\tau},\widetilde{P}_H)$ denote the pullback of the uniformizing variation of Hodge structure $(P|_{\mathscr{X}-\mathscr{D}},\tau|_{\mathscr{X}-\mathscr{D}},P_H)$ to the universal covering stack $\widetilde{\mathscr{X}-\mathscr{D}}$.

The flat connection $D_H$ on the bundle $P_H\times_{K_0}G_0$ (see Definition \ref{Uniformizng VHS}) pulls back to a flat connection on the bundle $\widetilde{P}_H\times_{K_0}G_0$ and induces a trivialization $\widetilde{P}_H\times_{K_0}G_0\cong(\widetilde{\mathscr{X}-\mathscr{D}})\times G_0$. Consider the composition
\[
\widetilde{P}_H\hookrightarrow\widetilde{P}_H\times_{K_0}G_0\xrightarrow{\sim}(\widetilde{\mathscr{X}-\mathscr{D}})\times G_0\to G_0
\]
which is a $K_0$-equivariant morphism. This induces a morphism 
\[
\phi:\widetilde{\mathscr{X}-\mathscr{D}}\to G_0/K_0=:\mathcal{D}.
\] 
By construction, we have $\widetilde{P}_H\cong\phi^*G_0$, where $G_0$ is viewed as a principal $K_0$-bundle on $\mathcal{D}$.

The metric $h_H$ on $P|_{\mathscr{X}-\mathscr{D}}\times_K\mathfrak{g}^{-1,1}$ pulls back to a metric $\widetilde{h}_H$ on $\widetilde{P}\times_K\mathfrak{g}^{-1,1}$. Note that this is the same as the metric induced by the reduction $\widetilde{P}_H\subset\widetilde{P}$. Since $\widetilde{\tau}:\mathcal{T}_{\widetilde{\mathscr{X}-\mathscr{D}}}\to\widetilde{P}\times_K\mathfrak{g}^{-1,1}$ is a isomorphism by Definition \ref{Uniformizng VHS}, this may be viewed as a metric on $\mathcal{T}_{\widetilde{\mathscr{X}-\mathscr{D}}}$.

On $\mathcal{D}$, we have an isomorphism $\mathcal{T}_{\mathcal{D}}\cong G_0\times_{K_0}\mathfrak{g}^{-1,1}$, and since $\widetilde{P}_H\cong\phi^*G_0$, it follows that $\mathcal{T}_{\widetilde{\mathscr{X}-\mathscr{D}}}\cong\phi^*\mathcal{T}_{\mathcal{D}}$. This implies that $\phi$ is \'etale and since $\mathcal{D}$ is an analytic space, so is $\widetilde{\mathscr{X}-\mathscr{D}}$.

Moreover, we have $\widetilde{h}_H=\phi^*h_\mathcal{D}$, where $h_{\mathcal{D}}$ is the canonical Hermitian metric on $\mathcal{T}_{\mathcal{D}}$. Thus $\phi$ is in fact a local isometry and $\widetilde{h}_H$ in invariant under the $\pi_1(\mathscr{X}-\mathscr{D})$-action on $\widetilde{\mathscr{X}-\mathscr{D}}$.

Therefore, if $h_H$ is a complete metric, then so is $\widetilde{h}_H$ and $\phi:\widetilde{\mathscr{X}-\mathscr{D}}\to\mathcal{D}$ is a covering map. Since $\widetilde{\mathscr{X}-\mathscr{D}}$ and $\mathcal{D}$ are both simply connected analytic spaces, it follows that $\phi$ is an isomorphism, as desired. 
\end{proof}

\subsection{Proof of Theorem \ref{Uniformization-thm}.}
The proof consists of three steps. First, we show that there is a map from the universal covering stack $\widetilde{\mathscr{X}-\mathscr{D}}$ of $\mathscr{X}-\mathscr{D}$ to the unit ball $\mathbb{B}^n$ which is locally biholomorphic. Next, we argue that this map is, in fact, an isomorphism, i.e., $\mathscr{X}-\mathscr{D}\cong[\mathbb{B}^d/\Gamma]$ for some lattice $\Gamma\in\mathrm{PU}(d,1)$. Finally, we show that $\mathscr{X}$ is birationally equivalent to compactification of $[\mathbb{B}^d/\Gamma]$ by a union of stacky quotients of abelian varieties by a finite group.

\subsection*{Step I} Let $(\mathcal{E},\theta)$ be the log Higgs bundle on $(\mathscr{X},\mathscr{D})$ given by $\mathcal{E}:=\Omega^1_{\mathscr{X}}(\log \mathscr{D})\oplus\mathcal{O}_\mathscr{X}$ with Higgs field
\begin{align*}
\theta:\Omega^1_{\mathscr{X}}(\log(\mathscr{D}))\oplus\mathcal{O}_{\mathscr{X}}&\to(\Omega^1_{\mathscr{X}}(\log(\mathscr{D}))\oplus\mathcal{O}_{\mathscr{X}})\otimes\Omega^1_{\mathscr{X}}(\log \mathscr{D})\\
(a,b)&\mapsto(0,1)\otimes a
\end{align*}
Let $f:Z_\bullet\to \mathscr{X}$ be a good covering for $(\mathscr{X},\mathscr{D})$. We want to show that the Bogomolov-Miyaoka-Yau inequality \ref{BMY1} holds.

By assumption, each log Higgs bundle $(f_k^*\mathcal{E},f_k^*\theta)$ on $(Z_k,f_k^*\mathscr{D})$ is slope-polystable with respect to $\mathcal{L}_k$. Then it follows from \cite[Theorem 6.5]{mochizuki_asterisque} that each $(f_k^*\mathcal{E},f_k^*\theta)$ satisfies the Bogomolov-Gieseker inequality
\begin{align*}\label{BMY2}
    (2\mathrm{rank}(f_k^*\mathcal{E})c_2(f_k^*\mathcal{E})-(\mathrm{rank}(f_k^*\mathcal{E})-1)c_1(f_k^*\mathcal{E})^2)\cdot c_1(\mathcal{L}_k)^{d-2}\ge0
\end{align*}
on $(Z_k,f_k^*\mathscr{D})$. Noting that $f_k^*\mathcal{E}=f_k^*\Omega^1_{\mathscr{X}}(\log \mathscr{D})\oplus\mathcal{O}_{Z_k}$, a straightforward computation yields
\begin{align*}
    (2(d+1)c_2(f_k^*\Omega^1_{\mathscr{X}}(\log \mathscr{D}))-dc_1(f_k^*\Omega^1_{\mathscr{X}}(\log \mathscr{D}))^2)\cdot c_1(\mathcal{L}_k)^{d-2}\ge0
\end{align*}
for all $k$. Since the collections $\{f_k^*\mathcal{E}\}$ and $\{\mathcal{L}_k\}$ satisfy compatibility, it follows that the inequality
\[
(2(d+1)c_2(f^*\Omega^1_{\mathscr{X}}(\log \mathscr{D}))-dc_1(f^*\Omega^1_{\mathscr{X}}(\log \mathscr{D}))^2)\cdot c_1(\mathcal{L})^{d-2}\ge0,
\] 
which is precisely \ref{BMY1}, holds on $Z_\bullet$. 

Note that the log Higgs bundle $(\mathcal{E},\theta)$ is actually a system of log Hodge bundles $\mathcal{E}=\mathcal{E}^{0,1}\oplus\mathcal{E}^{1,0}$ with $\mathcal{E}^{0,1}:=\mathcal{O}_\mathscr{X}$ and $\mathcal{E}^{1,0}:=\Omega^1_{\mathscr{X}}(\log \mathscr{D})$. It follows from Proposition \ref{Sys-of-Hodge-bundles-lem} that there is a principal system of log Hodge bundles $(P,\tau)$ on $(\mathscr{X},\mathscr{D})$ associated to $(\mathcal{E},\theta)$, whose structure group is $K=\mathrm{P}(\gl(d,\mathbb{C})\times\gl(1,\mathbb{C}))$. The Hodge group corresponding to $(P,\tau)$ is $G_0=\mathrm{PU}(d,1)$ and its maximal compact subgroup is $K_0=K\cap G_0=\mathrm{P}(\mathrm{U}(d)\times\mathrm{U}(1))$. The complexification of $G_0$ is the group $G=\mathrm{PGL}(\mathbb{C}^{d+1})$ and its adjoint representation $\mathrm{Ad}:G\to\mathrm{GL}(\mathfrak{g})$ is faithful. This is a Hodge representation by \cite[Example 3.9]{dengcadorel}, and by \cite[Example 3.10]{dengcadorel} it induces a system of log Hodge bundles $(P\times_{\mathrm{Ad}}\mathfrak{g},\mathrm{d}(\mathrm{Ad})(\tau))$ on $(\mathscr{X},\mathscr{D})$. It follows from the construction of $(P,\tau)$ that there is an isomorphism
\[
(P\times_{\mathrm{Ad}}\mathfrak{g},\mathrm{d}(\mathrm{Ad})(\tau))\cong(\mathcal{E}nd(\mathcal{E})^\perp,\theta_{\mathcal{E}nd(\mathcal{E})^\perp})
\]
of log Higgs bundles on $\mathscr{X}$, where $\mathcal{E}nd(\mathcal{E})^\perp$ denotes the trace-free subbundle of the endomorphism bundle $\mathcal{E}nd(\mathcal{E})$ and $\theta_{\mathcal{E}nd(\mathcal{E})^\perp}$ is the Higgs field induced from $(\mathcal{E},\theta)$. 
It is clear that $c_1(\mathcal{E}nd(\mathcal{E}))=0$, and since we assume that equality holds in \ref{BMY1}, a straightforward computation implies that 
\begin{align*}
ch_2(\mathcal{E}nd(\mathcal{E}))\cdot c_1(\mathcal{L})^{d-2}&=(-2\mathrm{rank}\mathcal{E}\cdot c_2(\mathcal{E})+(\mathrm{rank}\mathcal{E}-1)c_1(\mathcal{E})^2)\cdot c_1(\mathcal{L})^{d-2}\\
&=-(2(d+1)c_2(\Omega^1_{\mathscr{X}}(\log \mathscr{D}))-dc_1(\Omega^1_{\mathscr{X}}(\log \mathscr{D}))^2)\cdot c_1(\mathcal{L})^{d-2}\\
&=0
\end{align*}
holds on $\mathscr{X}$.

Since $(\mathcal{E},\theta)$ is polystable with respect to $\mathcal{L}$ by assumption, the same is true for $(\mathcal{E}nd(\mathcal{E}),\theta_{\mathcal{E}nd(\mathcal{E})})$. We now apply Proposition \ref{Hodge-metric} to obtain a Hodge metric $h$ for the system of log Hodge bundles $(\mathcal{E}nd(\mathcal{E})|_{\mathscr{X}-\mathscr{D}},\theta_{\mathcal{E}nd(\mathcal{E})}|_{\mathscr{X}-\mathscr{D}})$ on $\mathscr{X}-\mathscr{D}$ which is adapted to $(\mathcal{E}nd(\mathcal{E}),\theta_{\mathcal{E}nd(\mathcal{E})})$. There is a decomposition 
\[
(\mathcal{E}nd(\mathcal{E}),\theta_{\mathcal{E}nd(\mathcal{E})})=(\mathcal{E}nd(\mathcal{E})^\perp,\theta_{\mathcal{E}nd(\mathcal{E})^\perp})\oplus(\mathcal{O}_{\mathscr{X}},0)
\] 
as log Higgs bundles on $(\mathscr{X},\mathscr{D})$, from which we obtain a decomposition $h=h_1\oplus h_2$, where $h_1$ is the Hodge metric on the Higgs bundle $(\mathcal{E}nd(\mathcal{E})^\perp|_{\mathscr{X}-\mathscr{D}},\theta_{\mathcal{E}nd(\mathcal{E})^\perp}|_{\mathscr{X}-\mathscr{D}})$ which is adapted to $(\mathcal{E}nd(\mathcal{E})^\perp,\theta_{\mathcal{E}nd(\mathcal{E})^\perp})$, and $h_2$ is the canonical Hodge metric on the trivial log Higgs bundle $(\mathcal{O}_{\mathscr{X}},0)$. 

We apply Proposition \ref{Principal-VHS-prop} to obtain a metric reduction $P_H$ for $P|_{\mathscr{X}-\mathscr{D}}$ on $\mathscr{X}-\mathscr{D}$ with structure group $K_0=\mathrm{P}(\mathrm{U}(d)\times\mathrm{U}(1))\cong\mathrm{U}(d)$ induced by $h_1$, so that the triple $(P|_{\mathscr{X}-\mathscr{D}},\tau|_{\mathscr{X}-\mathscr{D}},P_H)$ is a principal variation of Hodge structures on $\mathscr{X}-\mathscr{D}$. In fact, since
\[
\tau:\mathcal{T}_{\mathscr{X}}(-\log \mathscr{D})\to P\times_K\mathfrak{g}^{-1,1}\cong\mathcal{H}om(\Omega^1_{\mathscr{X}}(\log \mathscr{D}),\mathcal{O}_{\mathscr{X}})
\]
is an isomorphism, $(P|_{\mathscr{X}-\mathscr{D}},\tau|_{\mathscr{X}-\mathscr{D}},P_H)$ is a uniformizing variation of Hodge structure over $\mathscr{X}-\mathscr{D}$.

It follows from Lemma \ref{Completeness-lem} that there is a morphism of analytic stacks
\begin{align}\label{Local-biholo}
\widetilde{\mathscr{X}-\mathscr{D}}\to G_0/K_0=\mathrm{PU}(d,1)/\mathrm{U}(d)=\mathbb{B}^d
\end{align}
which is \'etale. Since $\mathbb{B}^n$ is a smooth analytic space, it follows that the same holds for $\widetilde{\mathscr{X}-\mathscr{D}}$. This concludes the first step of the proof.

\subsection*{Step II} The next step is to prove that the morphism \ref{Local-biholo} is actually an isomorphism. Note that the metric reduction $P_H$ for $P|_{\mathscr{X}-\mathscr{D}}$ together with the metric $h_{\mathfrak{g}}$ defined in \cite[3.0.1]{dengcadorel} defines a natural metric $h_H$ on the bundle $P|_{\mathscr{X}-\mathscr{D}}\times_K\mathfrak{g}$. We want to show that the metric $h_H$ is complete on $\mathscr{X}-\mathscr{D}$. 

We assume from now on that the divisor $\mathscr{D}$ is smooth. This implies that $f^*\mathscr{D}$ is a smooth divisor on any good covering $f:Z_\bullet\to \mathscr{X}$ for $(\mathscr{X},\mathscr{D})$. Let $V_\bullet:=Z_\bullet-f^*\mathscr{D}$ and $V_k:=Z_k-f_k^*\mathscr{D}$ as usual. Consider the system of log Hodge bundles $(\mathcal{F},\nu):=(\mathcal{E}nd(\mathcal{E}),\theta_{\mathcal{E}nd(\mathcal{E})})$ on $(\mathscr{X},\mathscr{D})$. This pulls back to a system of log Hodge bundles $(f^*\mathcal{F},f^*\nu)$ on $(Z_\bullet,f^*\mathscr{D})$, which is the same as a collection of systems of log Hodge bundles $(f_k^*\mathcal{F},f_k^*\nu)$ on $(Z_k,f_k^*\mathscr{D})$ that satisfy compatibility. 

By the computations in Step II of the proof of \cite[Theorem 5.7(i)]{dengcadorel}, the Hodge metric $f_k^*h$ on $(f_k^*\mathcal{F}|_{V_k},f_k^*\nu|_{V_k})$ is complete, from which it follows that the metric $f_k^*h_H$ on $f_k^*P|_{V_k}\times_K\mathfrak{g}$ is complete for each $k$. Thus, the metric $f^*h_H$ is complete on $V_\bullet$ and descends to a complete metric, namely $h_H$, on $\mathscr{X}-\mathscr{D}$ because the covering $V_\bullet\to\mathscr{X}-\mathscr{D}$ is proper by Lemma \ref{hypercoveropen-lem}. 

We conclude from Lemma \ref{Completeness-lem} that $\widetilde{\mathscr{X}-\mathscr{D}}\cong\mathbb{B}^n$. This finishes the second step of the proof.

\subsection*{Step III} We begin by recalling the notion of a \emph{neat subgroup} of a connected semisimple algebraic group $\mathscr{G}$ from \cite[Chapter 17.1]{Borel69}.

\begin{definition}
    A subgroup $\Lambda\subset\mathscr{G}$ is said to be \emph{neat} if for all $g\in\Lambda$ and any faithful representation $\rho:\mathscr{G}\to\gl(n)$, the subgroup of $\mathbb{C}^*$ generated by the eigenvalues of $\rho(g)$ is torsion-free.
\end{definition}

It follows from \cite[Theorem 6.11]{Rag72} that any lattice ${PU}(d,1)$ admits a neat normal sublattice $\Gamma'$ of finite index. Let $\Gamma'\subset\Gamma$ be such a sublattice, set $U:=\mathbb{B}^d/\Gamma$ and $U':=\mathbb{B}^d/\Gamma'$ and note that $U'$ is a smooth quasiprojective variety. There is a finite surjective morphism $U'\to U$ and we have $U'\cong U/G$, where $G:=\Gamma/\Gamma'$.

Since $X$ is the moduli space of a smooth proper DM-stack, it has at worst quotient singularities. Suppose that $X$ is projective and let $X'$ denote the normalization of $X$ in the function field of $U'$ (see, e.g., \cite[Chapter XII, Section 9]{acgh2}). Then $X'$ is a normal projective variety, and there is a finite surjective morphism $X'\to X$ that extends the morphism $U'\to U$.

Let $Y'$ denote the (unique) toroidal compactification of $U'$ constructed in \cite[Chapter 5]{AMRT10} and note that the boundary $Y'-U'$ is a smooth divisor. Then by \cite[Lemma A.4]{dengcadorel} there is a birational morphism $X'\to Y'$. This is in fact an isomorphism by \cite[Lemma A.11]{dengcadorel}. In particular, there is a finite surjective morphism $Y'\to X$, which extends the morphism $U'\to U$. 

From the proof of \cite[Proposition A.1]{dengcadorel} it follows that the action of $G$ on $U'$ extends to $Y'$. We conclude from \cite[Lemmas A.13 and A.14]{dengcadorel} that $X\cong Y'/G$ and thus the boundary $D:=X-U$ is a disjoint union $\bigsqcup_iA_i/G$, where each $A_i$ is an abelian variety. 

The quotient $[Y'/G]$ is a smooth proper DM-stack that contains $[\mathbb{B}^d/\Gamma]$ as an open substack. The identity morphism of $[\mathbb{B}^n/\Gamma]$ then defines a birational map $\mathscr{X}\dashrightarrow[Y'/G]$ as claimed. This finishes the proof of Theorem \ref{Uniformization-thm}.
\qed\\
\\
Now we prove the converse Theorem \ref{Converse-thm} using the deep results in Section 6 and the appendix of \cite{dengcadorel}.

\subsection{Proof of Theorem \ref{Converse-thm}.}
Let $\Gamma'\subset\Gamma$ be a neat normal sublattice of finite index. The quotient map $\mathbb{B}^d\to[\mathbb{B}^d/\Gamma]$ factors as follows
\begin{center}
\begin{tikzcd}
\mathbb{B}^d\ar{r}\ar{dr}&{[\mathbb{B}^d/\Gamma']}\ar{r}\ar{d}&{[\mathbb{B}^d/\Gamma]}\ar{d} \\
&\mathbb{B}^d/\Gamma'\ar{r}&\mathbb{B}^d/\Gamma.
\end{tikzcd}
\end{center}
The vertical arrows are the natural maps to the moduli spaces. Note that the map $[\mathbb{B}^d/\Gamma']\to\mathbb{B}^d/\Gamma'$ is an isomorphism because $\Gamma'$ is torsion-free. We know from \cite[Theorem 5.2]{AMRT10} that there is a unique smooth toroidal compactification $X'$ of $[\mathbb{B}^d/\Gamma']$. Moreover, we deduce from the proof of \cite[Lemma A.2]{dengcadorel} that the natural action of the finite group $G:=\Gamma/\Gamma'$ on $\mathbb{B}^n/\Gamma$ extends to $X'$. 

We define $\mathscr{X}$ as the quotient stack $[X'/G]$. Hence, $\mathscr{X}$ is a smooth proper DM-stack that contains the ball quotient $[\mathbb{B}^d/\Gamma]$ as a dense open substack. By construction, the boundary $X'-\mathbb{B}^d/\Gamma'$ is a union of Abelian varieties, from which it follows that the boundary $\mathscr{D}=\mathscr{X}-[\mathbb{B}^d/\Gamma]$ is a union of quotients of Abelian varieties by the finite group $G$. This proves the first assertion of Theorem \ref{Converse-thm}.

Let $D':=X'-\mathbb{B}^d/\Gamma'$. It follows from \cite[Section 6.4]{dengcadorel} that the Chern class equality 
\begin{align*}
    2(d+1)c_2(\Omega^1_{X'}(\log D'))-dc_1(\Omega^1_{X'}(\log D'))^2=0
\end{align*}
holds on $X'$. Since $X'\to\mathscr{X}$ is a global \'etale chart, it follows that the equality 
\[
2(d+1)c_2(\Omega^1_{\mathscr{X}}(\log \mathscr{D}))-dc_1(\Omega^1_{\mathscr{X}}(\log \mathscr{D}))^2=0,
\]
which is precisely \ref{CCeq}, holds on $\mathscr{X}$. 

Consider the log Higgs bundle $(\mathcal{E}':=\Omega^1_{X'}(\log D')\oplus\mathcal{O}_{X'},\theta')$ on $(X',D')$, where $\theta'$ is the Higgs field defined in Theorem \ref{Uniformization-thm}. We know from \cite[Section 6.4]{dengcadorel} that the Higgs bundle $(\mathcal{E}'|_{X'-D'},\theta'|_{X'-D'})$ admits a Hermitian metric $h'$ that is 
\begin{enumerate}
\item Hermitian-Yang-Mills (\cite[Theorem 6.7]{dengcadorel})\label{one}
\item adapted to log order (\cite[Definition 5.2]{dengcadorel}), and \label{two}
\item acceptable (\cite[Definition 5.1]{dengcadorel}).\label{three}
\end{enumerate}
Note that $(\mathcal{E}',\theta')$ descends to the log Higgs bundle $(\mathcal{E}:=\Omega^1_{\mathscr{X}}(\log\mathscr{D})\oplus\mathcal{O}_\mathscr{X},\theta)$ on $\mathscr{X}$ and the metric $h'$ descends to a metric $h$ on $(\mathcal{E}|_{\mathscr{X}-\mathscr{D}},\theta|_{\mathscr{X}-\mathscr{D}})$ that satisfies properties \ref{one}--\ref{three} since $X'$ is a global \'etale chart for $\mathscr{X}$.

Let $f:Z_\bullet\to\mathscr{X}$ be a good covering for $(\mathscr{X,\mathscr{D}})$ and let $\mathcal{L}$ be an ample line bundle on $Z_\bullet$. Then the metric $f^*h$ on $(f^*\mathcal{E}|_{V_\bullet},\theta|_{V_\bullet})$ also satisfies properties \ref{one}--\ref{three}, where $V_\bullet:=Z_\bullet-f^*\mathscr{D}$. In particular, the metrics $f_k^*h$ on $(f_k^*\mathcal{E}|_{V_k},\theta|_{V_k})$ satisfy the three properties of \cite[Theorem 6.7]{dengcadorel} and it follows that $(f_k^*\mathcal{E},f_k^*\theta)$ is slope-polystable with respect to $\mathcal{L}_k$, for each $k$. Equivalently, $(\mathcal{E},\theta)$ is polystable with respect to $\mathcal{L}$. This concludes the proof of Theorem \ref{Converse-thm}. 
\qed

\appendix \section{Coskeletion and hypercovers.}
In this subsection, we recall relevant definitions from \cite{conradcohmdescent} which recalls and proves relevant statements on hypercovers.
\begin{definition}\label{defa1}
    Let $C$ be a category. Let $\operatorname{Simp}(C) = \operatorname{Fun}(\Delta^{\operatorname{op}},C)$ and for $n \ge 0$, let $\operatorname{Simp}_n(C) = \operatorname{Fun}(\Delta^{\operatorname{op}}_{\le n},C)$. We have similar notations for augmented simplicial objects $\operatorname{Simp}^{+}(C)$ and $\operatorname{Simp}^+_n(C)$.
\end{definition}

\begin{notation}
    The \textit{$n$-skeleton functor} $\operatorname{Simp}^+(C) \to \operatorname{Simp}^+_n(C)$ is the functor induced by the inclusion $\Delta_{\le n} \hookrightarrow \Delta$. We use the same notiation for the augmented simplicial objects too.
\end{notation}
\begin{example}
    Let $f: X \to Y$ be a map of schemes, then the \v{C}ech nerve of this morphism given by the diagram
    \begin{equation}
        \begin{tikzcd}
            \cdots X^{n+1}_Y \cdots X^2_Y \arrow[r, shift right=2] \arrow[r,shift right=-2]  & X \arrow[l,dotted] \arrow[r] & Y 
        \end{tikzcd}
    \end{equation}
    is an augmented simplicial object in the category of schemes. The diagram :
    \begin{equation}
          \begin{tikzcd}
   X^2_Y \arrow[r, shift right=2] \arrow[r,shift right=-2]  & X \arrow[l,dotted] \arrow[r] & Y 
        \end{tikzcd}
    \end{equation}

 is an example of an object in $\operatorname{Simp}^+_2(\operatorname{Sch})$.
 \end{example}
We recall the notion of coskeleton functor, which is defined as the right adjoint of the skeleton functor.
\begin{definition}\cite[Definition 3.2]{conradcohmdescent}
    For $n \ge 0$. an \textit{$n$-coskeleton functor} (if it exists) $\operatorname{cosk}_n : \operatorname{Simp}_n(C) \to \operatorname{Simp}(C)$ is a functor which is right adjoint to $\operatorname{sk}_n$.
\end{definition}

\begin{remark}
    When $C$ admits finite limits, the coskeleton functor is defined by the following formula:
    \begin{equation}
        \operatorname{cosk}_n(U)_m := \operatorname{lim}_{[p] \in (\Delta^{\le n}_{/[m]})} U([p]).
    \end{equation}
    where $\Delta^{\le n}_{/[m]}$ is the subcategory of $\Delta^{\le n}$ spanned by objects $[p]$ along with a morphism $[p] \to [m]$ (\cite[\href{https://stacks.math.columbia.edu/tag/0183}{Tag 0183}]{stacks-project}). For $n=0$, the $0$-coskeleta functor gives us the \v{C}ech nerve of a morphism. 
\end{remark}
The notion of coskeleton functors help us to introduce the notion of hypercovers.

\begin{definition}\cite[Definition 4.1]{conradcohmdescent}
    Let $C$ be a category admitting finite products and finite fiber products. Let $\mathbf{P}$ be a class of morphisms in $C$ which is stable under base change, preserved under compositions and contains all isomorphism. A simplicial object $X_{\bullet}$ in $C$ is said to be a $\mathbf{P}$-hypercovering if  for all $n \ge 0$, the natural adjunction map 
    \begin{equation}
        X_{\bullet} \to \operatorname{cosk}_n\operatorname{sk}_n(X_{\bullet})
    \end{equation}
    which in degree $n+1$ $X_{n+1} \to (\operatorname{cosk}_n\operatorname{sk}_n X)_{n+1}$ is an element in $\mathbf{P}$. If $X_{\bullet}$ is an augmented simplicial object, then we make a similar definition but also require it for case $n =-1$.
\end{definition}

We have also variants $\operatorname{sk}^n_m, \operatorname{cosk}^n_m$ between the categories $\operatorname{Simp}_n(C)$ and $\operatorname{Simp}_m(C)$ respectively.

In our case, we want to consider only specific hypercovers which are ``split". 
\begin{definition}
    Let $C$ be a category admitting finite coproducts. We say a simplicial object $X_{\bullet}$ in $C$ is \textit{split} if there exits subobjects $NX_j$ in each $X_j$ such that the natural map 
    \begin{equation}
        \coprod_{\phi : [n] \twoheadrightarrow [m]; m \le n}NX_m \to X_n
    \end{equation}
    is an isomorphism for every $n \ge 0$ where $NX_n \to X_m$ is induced by the map $NX_n \subset X_n \xrightarrow{X(\phi)}X_m$. Such a specification of such objects $NX_j$ is called a \textit{splitting} of $X_{\bullet}$.
\end{definition}
\begin{example}
    Let $f: X \to Y$ be a morphism in $C$ admitting a section $s: Y \to X$, then the \v{C}ech nerve of $f$ is a split-simplicial object of $C$. The decomposition of every term is produced by the section $s$. 
\end{example}
Before stating the key results of hypercovers, we also recall the notion of $m$-truncated $\mathbf{P}$-hypercoverings.
\begin{definition}\cite[Example 4.5]{conradcohmdescent}
    Let $X_{\bullet} \in \operatorname{Simp}_m(C)$. It is said to be  \textit{$m$-truncated $\mathbf{P}$-hypercovering} if for $n <m$, the unit map 
    \begin{equation}
        X_{\bullet} \to \operatorname{cosk}^n_m\operatorname{sk}^n_mX_{\bullet}
    \end{equation}
    is levelwise in $\mathbf{P}$.\end{definition}

\begin{remark}
  It turns out that, to construct split-simplicial hypercovers, one increases the truncation. The key argument in constructing split hypercovers using the notion of $m$-truncation is as follows :
    \begin{enumerate}
        \item Let $X_{\bullet}$ be an $m$-truncated split hypercovering. The goal is to construct an $m+1$-truncated hypercovering. 
        
        \item For this, we need 
        \begin{itemize}
            \item an object $N:=NX_m$ which we would like to "glue" at the $m+1$-skeleton of the $m+1$ truncated simplicial object 
            \item A map 
            \begin{equation}
                \beta : N \to (\operatorname{cosk}_mX)_{m+1}
            \end{equation}
        \end{itemize}
        \item The triple $(N,X,\beta)$ is sufficient enough to construct the $m+1$-truncated split hypercover $X'_{\bullet}$
        where roughly the objects are described as follows :
        \begin{equation}
            X'_l = \begin{cases}
                X_l \quad \quad \quad \quad \quad \quad \quad \quad \quad \quad \quad l \le m \\
                N \coprod \oplus_{[m+1] \twoheadrightarrow p, p \le m} X_p \quad \quad l= m+1
            \end{cases}
        \end{equation}
        The triple of data is collectively written as $\alpha(X)= (Y,N,\beta)$.
    \end{enumerate}
    The following exposition is more precise along with other properties of hypercoverings in the next proposition
\end{remark}

We now list the key properties of hypercoverings which are key for our purpose.
\begin{proposition}\label{propertyofPhypercoverings}
Let $C$ be category of smooth proper DM stacks (etale over base) and $\mathbf{P}$ be the class of proper (etale) surjective morphisms. Then:
    \begin{itemize}
    \item For any split-simplicial $m$-truncated object $Y$, an object $N$ in $C$ and a morphism 
    \begin{equation}
        \beta : N \to (\operatorname{coskn}_m Y)_{m+1},
    \end{equation}
    there exists a split-$m+1$-truncated simplicial object $X$ such that $\alpha(X)=(N,Y,\beta)$ and it is unique upto isomorphism. (\cite[Theorem 4.12]{conradcohmdescent}).        
    \item If $X_{\bullet}$ is a $m$-truncated $\mathbf{P}$-hypercovering, then $\operatorname{coskn}_m(Y_{\bullet})$ is a $\mathbf{P}$-hypercovering (\cite[Example 4.5]{conradcohmdescent}).
        \item Every simplicial $\mathbf{P}$-hypercovering $X_{\bullet}$ of $C$  with the property that $\operatorname{sk}_n(X_{\bullet})$ is split admits a split refinement i.e. there exists another simplicial object $X'_{\bullet}$ of $C$ with a map $f:X'_{\bullet} \to X_{\bullet}$ such that $\operatorname{sk}_n(f)$ is an isomorphism and $X'_{\bullet}$ is split (\cite[Thoerem 4.13]{conradcohmdescent}).
\item Every $m$-truncated $\mathbf{P}$-hypercovering has proper (\'etale) face and degeneracy maps (\cite[Corollary 4.14]{conradcohmdescent}).
 
         \end{itemize}
\end{proposition}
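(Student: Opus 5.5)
The plan is to reduce every item to the general results of \cite{conradcohmdescent}. Those results are stated for an arbitrary category $C$ with finite limits and finite coproducts, and a class $\mathbf{P}$ of morphisms that is stable under base change and composition and contains all isomorphisms. The work is therefore in fixing an ambient category that satisfies these axioms and in checking that the classes of proper surjective and of \'etale surjective morphisms qualify.

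\textbf{Step 1: replace $C$ by a 1-category with finite limits.} The category of \emph{smooth} proper DM-stacks does not have fibre products, since fibre products of smooth stacks need not be smooth. It is also a 2-category. To avoid both issues I would work in the slice category $C_{\mathscr{X}}$ of algebraic spaces of finite type equipped with a morphism to the fixed stack $\mathscr{X}$, and regard augmented simplicial objects over $\mathscr{X}$.
\begin{enumerate}
\item Since $\mathscr{X}$ is DM, its diagonal is representable by algebraic spaces.
\item Consequently, for $T,T'\in C_{\mathscr{X}}$ the fibre product $T\times_{\mathscr{X}}T'$ is again an algebraic space of finite type, and all finite limits in $C_{\mathscr{X}}$ exist and are computed as ordinary (1-categorical) fibre products.
\item Finite coproducts are disjoint unions.
\end{enumerate}
So the coskeleton functors exist and are given by the limit formula in the Remark preceding the Proposition. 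All levels $Z_k$ in Lemma \ref{Hypercover-lem} are schemes over $\mathscr{X}$, so they live in $C_{\mathscr{X}}$. The smoothness and projectivity of the levels are not part of the categorical input. They are imposed afterwards by the resolution step $\beta:N\to(\operatorname{coskn}_m Z_{\le m})_{m+1}$, which is exactly how Lemma \ref{Hypercover-lem} uses the Proposition.

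\textbf{Step 2: check the axioms on $\mathbf{P}$.} Proper surjective morphisms, and \'etale surjective morphisms, of algebraic spaces (or representable morphisms to $\mathscr{X}$) are standardly stable under base change and composition, and they contain isomorphisms. With $C_{\mathscr{X}}$ and $\mathbf{P}$ in place, the four bullets follow directly from the cited results:
\begin{enumerate}
\item The first bullet is \cite[Theorem 4.12]{conradcohmdescent}. It only needs finite limits and disjoint coproducts, to form $N\sqcup\coprod_{[m+1]\twoheadrightarrow[p]}Y_p$ and the face maps induced by $\beta$.
\item The second bullet is \cite[Example 4.5]{conradcohmdescent}. The point is that $\operatorname{cosk}_m$ of an $m$-truncated $\mathbf{P}$-hypercovering has all higher comparison maps equal to isomorphisms.
\item The third bullet is \cite[Theorem 4.13]{conradcohmdescent}.
\item The fourth bullet is \cite[Corollary 4.14]{conradcohmdescent}. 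Its proof writes each face and degeneracy map as a composite of base changes of the maps $X_{n+1}\to(\operatorname{cosk}_n\operatorname{sk}_n X)_{n+1}$, and so uses only stability of $\mathbf{P}$ under base change and composition.
\end{enumerate}

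\textbf{Main obstacle.} I expect the main obstacle to be the 2-categorical subtlety of passing from simplicial objects over the stack $\mathscr{X}$ to a genuine 1-category. I would handle it as in Step 1: take the augmentation to $\mathscr{X}$ as part of the data and use representability of the diagonal of $\mathscr{X}$, so that every iterated fibre product $Z_{i_0}\times_{\mathscr{X}}\cdots\times_{\mathscr{X}}Z_{i_r}$ is an honest algebraic space. Once this is done, all coskeleta and comparison maps are 1-morphisms and Conrad's arguments go through verbatim.

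A secondary check is the \'etale sub-hypercovering $U_\bullet$. For it I would apply the same results to the class of \'etale surjective morphisms. I would also note that the restriction of $\beta$ over $\operatorname{coskn}_m(U_{\le m})_{m+1}$ remains \'etale surjective, because resolution of singularities is an isomorphism over the locus where the divisor is already snc.
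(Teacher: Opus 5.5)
Your proposal takes the same route as the paper, which gives no argument beyond citing the four results of \cite{conradcohmdescent} (Theorem 4.12, Example 4.5, Theorem 4.13, Corollary 4.14). Your Step 1 usefully makes explicit a setting the paper leaves implicit: you work in the 1-category of algebraic spaces representable over $\mathscr{X}$, where the fibre products defining the coskeleta exist, and impose smoothness afterwards by resolution, which is needed since smooth proper DM-stacks do not have fibre products.

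One small correction to your account of the fourth bullet: degeneracy maps are not composites of base changes of $\mathbf{P}$-maps, because they are usually not surjective. They are proper (resp.\ \'etale) because they are sections of proper (resp.\ \'etale) face maps, hence closed (resp.\ open) immersions.
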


\bibliographystyle{plain}
\bibliography{sources}

@Article{Simpson_unif,
 Author = {Simpson, Carlos T.},
 Title = {Constructing variations of {Hodge} structure using {Yang}-{Mills} theory and applications to uniformization},
 FJournal = {Journal of the American Mathematical Society},
 Journal = {J. Am. Math. Soc.},
 ISSN = {0894-0347},
 Volume = {1},
 Number = {4},
 Pages = {867--918},
 Year = {1988},
 Language = {English},
 DOI = {10.2307/1990994},
 zbMATH = {4096412},
 Zbl = {0669.58008}
}

@Article{dengcadorel,
 Author = {Deng, Ya and Cadorel, Benoît},
 Title = {A characterization of complex quasi-projective manifolds uniformized by unit balls},
 FJournal = {Mathematische Annalen},
 Journal = {Math. Ann.},
 ISSN = {0025-5831},
 Volume = {384},
 Number = {3-4},
 Pages = {1833--1881},
 Year = {2022},
 Language = {English},
 DOI = {10.1007/s00208-021-02334-z},
 zbMATH = {7605983},
 Zbl = {1498.14022}
}

@article {simpsonstacks,
    AUTHOR = {Simpson, Carlos},
     TITLE = {Local systems on proper algebraic {$V$}-manifolds},
   JOURNAL = {Pure Appl. Math. Q.},
  FJOURNAL = {Pure and Applied Mathematics Quarterly},
    VOLUME = {7},
      YEAR = {2011},
    NUMBER = {4},
     PAGES = {1675--1759},
      ISSN = {1558-8599,1558-8602},
   MRCLASS = {14D23 (14A20 14C30 32C18)},
  MRNUMBER = {2918179},
MRREVIEWER = {Jack\ Hall},
       DOI = {10.4310/PAMQ.2011.v7.n4.a27},
       URL = {https://doi.org/10.4310/PAMQ.2011.v7.n4.a27},
}

@article {mochizuki_asterisque,
    AUTHOR = {Mochizuki, Takuro},
     TITLE = {Kobayashi-{H}itchin correspondence for tame harmonic bundles
              and an application},
   JOURNAL = {Ast\'{e}risque},
  FJOURNAL = {Ast\'{e}risque},
    NUMBER = {309},
      YEAR = {2006},
     PAGES = {viii+117},
      ISSN = {0303-1179,2492-5926},
      ISBN = {978-2-85629-226-6},
   MRCLASS = {32Q20 (14D07 14J60 32G20 53C07)},
  MRNUMBER = {2310103},
MRREVIEWER = {Julien\ Keller},
}

@article {GKPT,
    AUTHOR = {Greb, Daniel and Kebekus, Stefan and Peternell, Thomas and
              Taji, Behrouz},
     TITLE = {The {M}iyaoka-{Y}au inequality and uniformisation of canonical
              models},
   JOURNAL = {Ann. Sci. \'{E}c. Norm. Sup\'{e}r. (4)},
  FJOURNAL = {Annales Scientifiques de l'\'{E}cole Normale Sup\'{e}rieure. Quatri\`eme
              S\'{e}rie},
    VOLUME = {52},
      YEAR = {2019},
    NUMBER = {6},
     PAGES = {1487--1535},
      ISSN = {0012-9593},
   MRCLASS = {14E30 (32Q30)},
  MRNUMBER = {4061021},
MRREVIEWER = {James McKernan},
       DOI = {10.24033/asens.2414},
       URL = {https://doi.org/10.24033/asens.2414},
}

@book {acgh2,
    AUTHOR = {Arbarello, E. and Cornalba, M. and Griffiths, P.},
     TITLE = {Geometry of algebraic curves. {V}olume {II}},
    SERIES = {Grundlehren der Mathematischen Wissenschaften},
    VOLUME = {268},
      NOTE = {With a contribution by J. Harris},
 PUBLISHER = {Springer},
   ADDRESS = {Heidelberg},
      YEAR = {2011},
     PAGES = {xxx+963},
      ISBN = {978-3-540-42688-2},
   MRCLASS = {14H10},
  MRNUMBER = {2807457},
}

@misc{conradcohmdescent,
author ={{C}onrad, {B}rian},
title = {{C}ohomological {D}escent},
howpublished={\url{https://math.stanford.edu/~conrad/papers/hypercover.pdf}},
}

@misc{GP24,
    author = {Graf, Patrick and Patel, Aryaman},
    title = {Uniformization of klt pairs by bounded symmetric domains},
    Eprint = {2410.12753},
    Eprinttype = {arxiv},
    year = {2024}
}

@book{Borel69,
    author = {Borel, Armand},
    title = {Introduction aux groupes arithmétiques, Actualités scientifiques et industrielles},
    publisher = {Hermann, Paris},
    year = {1969}
}

@book{AMRT10,
    author = {Ash, Avner and Mumford, David and Rappoport, Michael and Tai, Yung-sheng},
    title = {Smooth Compactifications of Locally Symmetric Varieties. With the Collaboration of Peter Scholze, 2nd edition},
    publisher = {Cambridge University Press, Cambridge},
    year = {2010}
}

@book{Rag72,
    author = {Raghunathan, M.S.},
    title = {Discrete Subgroups of Lie Groups, Results and Problems in Cell Differentiation},
    publisher = {Springer, Berlin},
    year = {1972}
}

@misc{stacks-project,
  author       = {The {Stacks project authors}},
  title        = {The Stacks project},
  howpublished = {\url{https://stacks.math.columbia.edu}},
  year         = {2026},
}

@article{P23,
    author = {Patel, Aryaman},
    title = {Uniformization of projective klt varieties by bounded symmetric domains},
    journal = {Selecta Math. (New Series)},
    year = {2026},
    volume = {32},
    number = {65}
}

\end{document}